\newcommand{\aone}{{k}}
\newcommand{\atwo}{\alpha}
\definecolor{darkmagenta}{rgb}{0.55, 0.0, 0.55}
\newtheorem{theorem}{Theorem}[section]
\newtheorem{lemma}[theorem]{Lemma}
\newtheorem{proposition}[theorem]{Proposition}
\newtheorem{definition}[theorem]{Definition}
\newtheorem{remark}[theorem]{Remark}
\numberwithin{equation}{section}
\begin{document}
\title{No pattern formation in a quasilinear chemotaxis model with local sensing} 
\thanks{Partially supported by PHC Star~43570WD - Mathematical Modeling and Analysis of Phenomena from Biology}

\author{Philippe Lauren\c{c}ot}
\address{Laboratoire de Math\'ematiques (LAMA), UMR~5127, Universit\'e Savoie Mont Blanc, CNRS \\ F--73000 Chamb\'ery, France}
\email{philippe.laurencot@univ-smb.fr}

\author{Ariane Trescases}
\address{Institut de Math\'ematiques de Toulouse, UMR~5219, Universit\'e de Toulouse, CNRS \\ F--31062 Toulouse Cedex 9, France}
\email{ariane.trescases@math.univ-toulouse.fr}

\keywords{asymptotic spatial homogeneity, Liapunov functional, chemotaxis model, local sensing, nonlinear diffusion}
\subjclass{35B40 35K65 35K51 35Q92}

\date{\today}

\begin{abstract}
Convergence to spatially homogeneous steady states is shown for a chemotaxis model with local sensing and possibly nonlinear diffusion when the intrinsic diffusion rate~$\phi$ dominates the inverse of the chemotactic motility function~$\gamma$, in the sense that $(\phi\gamma)'\ge 0$. This result encompasses and complies with the analysis and numerical simulations performed in Choi \& Kim (2023). The proof involves two steps: first, a Liapunov functional is constructed when $\phi\gamma$ is non-decreasing. The convergence proof relies on a detailed study of the dissipation of the Liapunov functional and requires additional technical assumptions on~$\phi$ and~$\gamma$.
\end{abstract}

\maketitle

%
%
\pagestyle{myheadings}
\markboth{\sc{Ph. Lauren\c cot \& A. Trescases}}{\sc{No pattern formation in a quasilinear chemotaxis model with local sensing}}

\section{Introduction}\label{sec.int}

In \cite{ChKi2023}, Choi \&~Kim study the emergence of patterns in the chemotaxis model with local sensing
\begin{subequations}\label{ks}
	\begin{align}
		& \partial_t u = \Delta\big( \phi(u) \gamma(v)\big), \qquad (t,x)\in (0,\infty)\times \Omega, \label{ks1}\\
		& \partial_t v = \Delta v -v + u , \qquad (t,x)\in (0,\infty)\times \Omega, \label{ks2}\\
		& \nabla\big( \phi(u)\gamma(v)\big)\cdot \mathbf{n} = \nabla v\cdot \mathbf{n} = 0, \qquad (t,x)\in (0,\infty)\times \partial\Omega, \label{ks3}\\
		& (u,v)(0) = (u^{in},v^{in}), \qquad x\in\Omega, \label{ks4}
	\end{align}
\end{subequations}
where $\Omega$ is a smooth bounded domain of $\mathbb{R}^n$, $n\ge 1$, $\mathbf{n}$ denotes the outward unit normal vector field to $\partial\Omega$, $u^{in}$ and $v^{in}$ are non-negative initial conditions, and the functions $\phi$ and $\gamma$  are given by
\begin{subequations}\label{pgck}
\begin{equation}
	\phi(s) = s^m, \quad \gamma(s) = a + \frac{b}{(s+s_0)^k}, \qquad s>0,  \label{pgck1}
\end{equation}
with parameters
\begin{equation}
	m>0, \quad a\ge 0, \quad b>0, \quad k>0, \quad s_0\ge 0. \label{pgck2}
\end{equation}
\end{subequations}
The system~\eqref{ks} describes the dynamics of the density~$u\ge 0$ of a population of cells secreting a chemical or signal with concentration~$v\ge 0$, in the spirit of the class of systems introduced by Keller \&~Segel \cite{KeSe70,KeSe71a}, considered in the context of local sensing (as opposed to gradient sensing, see \cite{DKTY2019,KeSe71a,OtSt97} for discussions in the case $\phi=\mathrm{id}$). The motion of cells is triggered by diffusion and a chemotactic bias, both being mediated by the density of cells~$u$ (when $m\ne 1$) and the chemical concentration~$v$, 
\begin{equation*}
	\partial_t u = \mathrm{div}\big( \phi'(u) \gamma(v) \nabla u + \phi(u) \gamma'(v) \nabla v \big),
\end{equation*}
while the chemical spreads by classical diffusion, thereby resulting in a triangular system of two second-order parabolic equations with a cross-diffusion term involving both variables. Observe that, owing to the choice~\eqref{pgck} of the motility~$\gamma$ with $\gamma'\le 0$, the chemical plays the role of a chemoattractant and the larger the chemical concentration, the weaker the chemotactic bias.

The analysis performed in \cite{ChKi2023} relies, in particular, on a detailed study of the set 
\begin{equation}\label{def:E}
	E := \left\{ s>0\ :\ - \frac{\phi\gamma'}{\phi'\gamma}(s) > 1 \right\} = \left\{ s>0\ :\ \frac{k}{m} \frac{s}{s+s_0} \frac{b}{b+ a(s+s_0)^k} > 1 \right\}
\end{equation}
of excitable density values and the emergence and the shapes of patterns are shown to be intimately connected to the structure of~$E$, a feature supported as well by numerical simulations. In particular, no pattern formation is expected when~$E$ is empty, and the main contribution of this paper is to show that this is indeed the case. Specifically, when~$\phi$ and~$\gamma$ are given by~\eqref{pgck}, we prove the following result.

\begin{theorem}\label{thm0}
	Assume that $\phi$ and $\gamma$ satisfy~\eqref{pgck} with $(\phi \gamma)'\ge 0$. Let $(u,v)$ be a non-negative weak solution to~\eqref{ks} in the sense of Definition~\ref{defin} below with non-negative initial conditions 
	\begin{equation*}
		u^{in}\in L^1(\Omega)\cap H^1(\Omega)', \qquad v^{in}\in C\big(\overline{\Omega}\big),
	\end{equation*}
	satisfying
	\begin{equation*}
		\text{ either }\;\; \left( s_0>0 \; \text{ and } \; m\ge \frac{k}{2} \right) \;\;\text{ or }\;\;  \inf_{\Omega} v^{in} > 0.
	\end{equation*}
	Then, there is $p_0\in [1,2)$ depending only on~$m$ and~$k$ such that, for $p\in [1,p_0]$ and $q\in [1,2)$, 
	\begin{equation}
		\lim_{t\to\infty} \|v(t) - M\|_q = \lim_{t\to\infty} \int_t^{t+1} \|u(\tau)-M\|_p^p\ \mathrm{d}\tau = 0 \label{cv0}
	\end{equation}
	with
	\begin{equation}\label{def:mass}
		M := \frac{1}{|\Omega|} \int_\Omega u^{in}(x)\ \mathrm{d}x
	\end{equation}
	Moreover, if $m>k+1$, then the convergence of $v$ in~\eqref{cv0} holds true for any $q\in [1,m+1-k)$.
\end{theorem}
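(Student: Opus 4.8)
The plan is a two-stage Liapunov-functional argument, as announced in the abstract: first exhibit a functional $\mathcal{L}$ that is bounded below along the trajectory and non-increasing, with dissipation $\mathcal{D}\ge 0$ a sum of non-negative terms; then turn the resulting bound $\int_0^\infty\mathcal{D}(t)\,\mathrm{d}t<\infty$ into the convergences~\eqref{cv0}.

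\textbf{Preliminaries.} First I would record the elementary facts about weak solutions that make the machinery run: non-negativity of $u,v$, conservation of mass $\|u(t)\|_1=M|\Omega|$ (so that, integrating~\eqref{ks2}, $\int_\Omega v(t)\to M|\Omega|$ exponentially fast), standard $L^\infty_t(L^q_x)$ bounds on $v$ obtained from~\eqref{ks2} with the $L^1$-datum $u$, and — crucially — a positive lower bound for $v$. The latter is where the dichotomy in the hypotheses enters: when $\inf_\Omega v^{in}>0$, comparison with the semigroup generated by $\partial_t-\Delta+1$ and the non-negative source $u$ gives $v\ge e^{-t}\inf_\Omega v^{in}$ and, with the regularity at hand, $\inf v>0$; when $s_0>0$ one instead uses that $\gamma(v)$ stays bounded for $v$ near $0$ together with $m\ge k/2$ to keep the relevant powers of $u$ and the factor $\gamma(v)$ in good spaces. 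Either way, $\gamma(v)$ is bounded above and below on the trajectory, which is precisely what is needed to keep the quasilinear diffusion non-degenerate in the $v$-variable and to control the coupling terms in $\mathcal{L}$.

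\textbf{The Liapunov functional.} I would look for $\mathcal{L}(u,v)=\int_\Omega W(u,v)\,\mathrm{d}x+\tfrac12\int_\Omega|\nabla v|^2\,\mathrm{d}x$, choosing $W$ — guided by primitives of $\phi\gamma$ and of $\gamma$, so that $\delta_u\mathcal{L}$ pairs with $\Delta(\phi(u)\gamma(v))$ and $\delta_v\mathcal{L}$ with $\partial_t v=\Delta v-v+u$ — so that, after differentiating in time, integrating by parts in the $u$-equation and substituting $\Delta v=\partial_t v+v-u$ in the $v$-equation, $\tfrac{\mathrm{d}}{\mathrm{d}t}\mathcal{L}$ reduces to $-\int_\Omega|\partial_t v|^2\,\mathrm{d}x$ minus a pointwise quadratic form in $(\nabla u,\nabla v)$ minus a zeroth-order term. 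The hypothesis $(\phi\gamma)'\ge 0$ (equivalently $E=\emptyset$, i.e.\ $-\phi\gamma'/(\phi'\gamma)\le 1$) is exactly what forces the discriminant of that quadratic form to be non-positive, so that, with the correction terms in $W$ tuned correctly, the dissipation is a genuine sum of squares; boundedness below of $\mathcal{L}$ follows from the lower bound on $v$, mass conservation, and convexity of $W(\cdot,v)$. One then has
\begin{equation*}
\frac{\mathrm{d}}{\mathrm{d}t}\mathcal{L}(u(t),v(t))=-\mathcal{D}(t)\le 0,\qquad \int_0^\infty\mathcal{D}(t)\,\mathrm{d}t\le \mathcal{L}(u^{in},v^{in})-\inf_{t\ge 0}\mathcal{L}<\infty,
\end{equation*}
where $\mathcal{D}$ controls $\|\partial_t v(t)\|_2^2$ together with a spatial-oscillation term for $u$ of the form $\|\nabla(\phi(u)\gamma(v))(t)\|_2^2$ (equivalently, using the $v$-bounds, $\|\nabla(u^m)(t)\|_2^2$).

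\textbf{From dissipation to convergence, and the main obstacle.} Integrability of $\mathcal{D}$ gives $\int_t^{t+1}\mathcal{D}(\tau)\,\mathrm{d}\tau\to 0$; in particular $\int_t^{t+1}\|\partial_t v\|_2^2\,\mathrm{d}\tau\to 0$ and $\int_t^{t+1}\|\nabla(\phi(u)\gamma(v))\|_2^2\,\mathrm{d}\tau\to 0$. By Poincaré–Wirtinger, $\phi(u)\gamma(v)$ becomes asymptotically spatially constant in an $L^2_tL^2_x$-averaged sense; feeding this into~\eqref{ks2} rewritten as $u=\partial_t v-\Delta v+v$, using $\int_t^{t+1}\|\partial_t v\|_2^2\to 0$, parabolic smoothing for $v$, the $H^1$-bound on $v$ coming from the $|\nabla v|^2$-term of $\mathcal{L}$, and mass conservation to pin down the constant, one obtains that $u$ is asymptotically spatially constant and equal to $M$ in the averaged sense, hence $\int_t^{t+1}\|u(\tau)-M\|_p^p\,\mathrm{d}\tau\to 0$; converting the control of $\nabla(u^m)$ into control of $\|u-M\|_p$ via Gagliardo–Nirenberg–Sobolev costs integrability, which is the origin of the threshold $p_0\in[1,2)$ depending on $m,k$. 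Then $v$ satisfies $\partial_t v-\Delta v+v-M\to 0$ with $v$ bounded in $L^\infty_tH^1_x$, whence $v\to M$; uniform integrability from the $L^\infty_t(L^q_x)$ bounds upgrades this to $L^q$ convergence for every $q\in[1,2)$, and when $m>k+1$ the $W$-term of $\mathcal{L}$ together with~\eqref{ks2} provides refined integrability of $v$ that widens the admissible range to $q<m+1-k$ by interpolation. I expect the genuinely hard part to be the construction and the sign of $\mathcal{D}$: the cross-term $\int\phi(u)\gamma'(v)\nabla u\cdot\nabla v$ and the term generated by $\partial_t v$ produce a quadratic form whose discriminant is, in general, a perfect square, so $\mathcal{D}$ fails to be sign-definite unless $(\phi\gamma)'\ge 0$ and the corrections in $W$ are exactly right, and making this work for the singular/degenerate pair $\phi(s)=s^m$, $\gamma(s)=a+b(s+s_0)^{-k}$ over the whole range of densities is where the technical hypotheses are really used; the second delicate point is that $\mathcal{D}$ only controls time-oscillations of $v$ and a degenerate spatial quantity for $u$, so passing from $\int_0^\infty\mathcal{D}<\infty$ to~\eqref{cv0} requires the boundedness and positivity of $v$ to keep the degeneracy of $u^m$ near $u=0$ under control, which is precisely why the conclusion for $u$ is limited to exponents $p\le p_0<2$.
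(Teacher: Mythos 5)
Your overall two-stage plan (a Liapunov functional plus an analysis of its dissipation) matches the paper's, but the functional you propose is not the one the paper uses, and in the quasilinear setting it is precisely the construction the authors state they could not make work. You look for $\mathcal{L}=\int_\Omega W(u,v)\,\mathrm{d}x+\tfrac12\|\nabla v\|_2^2$ whose dissipation contains $\|\partial_t v\|_2^2$ and a quadratic form in $(\nabla u,\nabla v)$, i.e.\ the classical free-energy structure of the semilinear case $\phi=\mathrm{id}$. Two concrete problems arise. First, for $\delta_u\mathcal{L}$ to pair with $\Delta(\phi(u)\gamma(v))$ and produce a sign, you would need $W_u(u,v)$ to be (up to harmless terms) $\phi(u)\gamma(v)$ itself, which is not the $u$-derivative of any function of $(u,v)$ unless $\gamma$ is constant; the resulting cross terms cannot be absorbed by tuning $W$ when $\phi\ne\mathrm{id}$, which is exactly why the paper abandons this route. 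Second, the weak solutions of Definition~\ref{defin} only satisfy $u\in L^\infty(0,T;L^1(\Omega)\cap H^1(\Omega)')$ and $\phi(u)\gamma(v)\in L^1$; neither $\int_\Omega W(u,v)\,\mathrm{d}x$ (which would require $u^{m+1}$-type integrability) nor $\|\nabla(\phi(u)\gamma(v))\|_2$ is meaningful at this regularity, so the dissipation bound $\int_t^{t+1}\|\nabla(\phi(u)\gamma(v))\|_2^2\,\mathrm{d}\tau\to0$ — on which your entire identification step and the Gagliardo--Nirenberg derivation of $p_0$ rest — is not available.

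The paper's functional instead lives at the $(H^1)'$ level: $\mathcal{L}_0=\|\nabla\mathcal{K}[u-v-M+\langle v\rangle]\|_2^2+\|v\|_2^2-|\Omega|\langle v\rangle^2+2\int_\Omega\Psi(v)\,\mathrm{d}x$ with $\Psi'=\phi\gamma$, and its dissipation consists of $\|\nabla\partial_t\mathcal{K}[v-\langle v\rangle]\|_2^2$ (hence only $\|\partial_t v\|_{(H^1)'}^2$, not $\|\partial_t v\|_2^2$), of $\int_\Omega(\phi\gamma)'(v)|\nabla v|^2\,\mathrm{d}x$, and, crucially, of the \emph{zeroth-order} discrepancy term $\int_\Omega\gamma(v)(\phi(v)-\phi(u))(v-u)\,\mathrm{d}x\ge0$. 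No spatial gradient of $u$ is ever controlled. The convergence of $u$ comes from this discrepancy term: once $v\to M$, the lower bounds \eqref{nd2} or \eqref{nd4} on difference quotients of $\phi$ convert its time-integrability into $\int_t^{t+1}\|u-v\|_p^p\,\mathrm{d}\tau\to0$, and the threshold $p_0$ arises from H\"older's inequality combined with $1/\gamma(v)\le K_\gamma(1+v)^{k}$ and the $L^2$ bound on $v$, not from Sobolev embedding of $\nabla(u^m)$. Two further points are missing from your sketch: the identity $\tfrac12\tfrac{\mathrm{d}}{\mathrm{d}t}\mathcal{L}_0+\mathcal{D}_0=(M-\langle v\rangle)\int_\Omega\phi(u)\gamma(v)\,\mathrm{d}x$ has the wrong sign when $\langle v^{in}\rangle<M$, forcing a separate argument with the additional term $(M-\langle v\rangle)\|\nabla\mathcal{K}[u-M]\|_2^2$ and a Gronwall estimate; and in the case $s_0>0$, $m\ge k/2$ (without $\inf_\Omega v^{in}>0$) there is no positive lower bound on $v$ — the paper treats it via Theorem~\ref{thm3}, whose hypotheses \eqref{nd4}--\eqref{gac1} are exactly where $m\ge k/2$ enters — whereas your sketch asserts such a bound in both cases. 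As written, the proposal therefore has genuine gaps both at the central construction and at the passage from dissipation to convergence of $u$.
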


Theorem~\ref{thm0} is a consequence of Theorems~\ref{thm2} and \ref{thm3} (see Section~\ref{sec:appli}) and extends and improves \cite[Theorem~1.5]{DLTW2023} dealing with the semilinear case $m=1$. The proof of \cite[Theorem~1.5]{DLTW2023} relies on the construction of a Liapunov functional involving the $(H^1)'$-norm of $u$ and the $L^1$-norm of $G_0(v)$ with the function $G_0$ given by $G_0'(s) = 2 s\gamma(s) - \gamma(M) s - M \gamma(M)$ for $s>0$ and $G_0(M)=0$, and requires the function $s\mapsto s\gamma(s)$ to be non-decreasing. Though it does not seem possible to adapt this construction to the quasilinear setting $\phi\ne\mathrm{id}$, we nevertheless succeed in constructing a Liapunov functional for~\eqref{ks} when $\phi\gamma$ is non-decreasing. The construction, performed in  Section~\ref{sec.tie}, is far more tricky and the Liapunov functional we obtain features the $(H^1)'$-norm of $u-v$ and $u$ besides the $L^2$-norm of $v$ and the $L^1$-norm of $\Psi(v)$ with $\psi$ given by $\Psi'=\phi\gamma$ and $\Psi(0)=0$. The next step is to exploit both the Liapunov functional and its dissipation to derive compactness and stabilization properties of the families $\mathcal{U} := \{u(t)\ :\ t\ge 0\}$ and $\mathcal{V} := \{v(t)\ :\ t\ge 0\}$. As the topology for $\mathcal{U}$ is rather weak, additional technical assumptions on $\phi$ and $\psi$ are needed to show the convergence of $u$ and $v$ to the spatially homogeneous steady state $(M,M)$ and three different settings are considered in the next section. In particular, the topology in which the convergence of $u$ takes place depends on these technical assumptions.

Let us now describe the contents of this paper: in the next section, we introduce the general class of functions $\phi$ and $\gamma$ to be handled in this paper and identify three cases in which we can prove long term convergence of weak solutions to spatially homogeneous steady states. We already point out here that we obtain only weak convergence as $t\to\infty$ of $u$ in $L^2(t,t+1;H^1(\Omega)')$ in Theorem~\ref{thm1}, while convergence in $L^1((t,t+1)\times\Omega)$ and $L^p((t,t+1)\times\Omega)$ for some $p\in (1,2)$ is established in Theorem~\ref{thm2} and Theorem~\ref{thm3}, respectively. Section~\ref{sec.tie} is devoted to the derivation of time-independent estimates and the construction of the Liapunov functional. The main property used in this section is the monotonicity of $\phi\gamma$, along with a growth condition on $\phi\gamma$, see~\eqref{hypcomp} below. The long term behaviour is studied in Section~\ref{sec.ltb} and begins with a lemma collecting properties of cluster points of $\{v(t)\ :\ t\ge 0\}$ in $L^1(\Omega)$ as $t\to\infty$. The identification of these cluster points is the step which requires additional assumptions on $\phi$ and $\gamma$. Finally, in Section~\ref{sec:appli}, we come back to the case where $\phi$ and $\gamma$ are given by~\eqref{pgck} and prove Theorem~\ref{thm0}. A short discussion on its outcome ends the paper.

\section{Main results}\label{sec.mr}

Throughout this paper, we assume that the diffusion rate~$\phi$ and the motility~$\gamma$ satisfy the following properties:
\begin{equation}
	\phi\in C([0,\infty))\cap C^1((0,\infty)), \quad \phi(0)=0, \quad \phi>0 \;\;\text{ and }\;\; \phi'>0 \;\;\text{ on }\;\; (0,\infty), \label{hypphia} 
\end{equation}
and
\begin{equation}
	\gamma\in C([0,\infty)) \cap C^1((0,\infty)), \quad \gamma>0 \;\;\text{ on }\;\; (0,\infty). \label{hypgamma}
\end{equation}
We further assume that there is $\kappa>0$ such that
\begin{equation}
	(\phi\gamma)(s) \le \kappa \big( 1 + \Psi(s) \big), \qquad s\ge 0, \label{hypcomp}
\end{equation}
where 
\begin{equation}
	\Psi(s) := \int_0^s (\phi\gamma)(s_*)\ \mathrm{d}s_*\ge 0, \qquad s\ge 0. \label{Psi}
\end{equation}
We note that $\Psi$ is well-defined, non-negative, and non-decreasing on $[0,\infty)$, according to~\eqref{hypphia} and~\eqref{hypgamma}. The condition~\eqref{hypcomp} implies that $\Psi$ grows at most exponentially fast at infinity. Let us mention at this point that the functions $\phi$ and $\gamma$ given by~\eqref{pgck} satisfy~\eqref{hypphia}, \eqref{hypgamma}, and~\eqref{hypcomp} for $s_0>0$. The forthcoming analysis however does not require the monotonicity of $\gamma$.

\medskip

Before state our result, let us define the notion of weak solution to~\eqref{ks} to be used in the sequel. Owing to the possible degeneracy of~\eqref{ks1}, when either $u$ or $\gamma(v)$ vanishes, we do not expect classical solutions to exist.

\begin{definition}[weak solution] \label{defin}
	Consider non-negative initial conditions $u^{in}$ and $v^{in}$ satisfying
	\begin{equation}\label{ci:reg}
		u^{in} \in L^1(\Omega) \cap H^1(\Omega)', \qquad
		v^{in} \in L^2(\Omega), \qquad
		\Psi(v^{in}) \in L^1(\Omega).
	\end{equation}
	A (global) weak solution of~\eqref{ks} is a couple of non-negative functions $(u,v)$ such that, for all $T>0$,
	\begin{align*} 
		u &\in L^\infty(0,T;L^1(\Omega)) \cap L^\infty(0,T;H^1(\Omega)'), \\ 
		v &\in L^\infty(0,T;L^2(\Omega)) \cap H^1(0,T;H^1(\Omega)'),\\
		\phi(u) \gamma(v) &\in L^1((0,T)\times\Omega), \quad \Psi(v) \in L^\infty(0,T;L^1(\Omega)),
	\end{align*}
	which satisfies 
	\begin{equation*}
		\int_\Omega u^{in} \varphi(0) ~\mathrm{d}x - \int_0^\infty \int_\Omega u \partial_t \varphi ~\mathrm{d}x ~\mathrm{d}t - \int_0^\infty \int_\Omega \phi(u)\gamma(v) \, \Delta \varphi ~\mathrm{d}x ~\mathrm{d}t = 0 
	\end{equation*}
	and
	\begin{equation*}
		\int_\Omega v^{in} \varphi(0) ~\mathrm{d}x - \int_0^\infty \int_\Omega v \partial_t \varphi ~\mathrm{d}x ~\mathrm{d}t + \int_0^\infty \int_\Omega v \left(\varphi - \Delta \varphi \right) ~\mathrm{d}x ~\mathrm{d}t = \int_0^\infty \int_\Omega u \varphi ~\mathrm{d}x ~\mathrm{d}t 
	\end{equation*}
	for all $\varphi \in C^2_c([0,\infty)\times\overline{\Omega})$ with $\nabla\varphi\cdot \mathbf{n}=0$ on $(0,\infty)\times\partial\Omega$.
\end{definition}

We shall not discuss here the existence of weak solutions to~\eqref{ks}, postponing this issue to future research. Besides the semilinear case $\phi=\mathrm{id}$ which has been throroughly studied \cite{BLT2020, DKTY2019, DLTW2023, LiJi2020, TaWi2017} and for which classical solutions are also available \cite{FuJi2021a, FuJi2021b, FuSe2022a, FuSe2022b, JLZ2022, LyWa2021, YoKi2017}, weak solutions are constructed in \cite{Wink2020} for $\phi(s)=s^m$, $m>1$, and $\gamma\in C^3((0,\infty))$ satisfying $\gamma>0$ on $(0,\infty)$, along with 
\begin{equation*}
	\gamma\in W^{1,\infty}(s_0,\infty) \;\;\text{ for any}\;\; s_0>0 \;\;\text{ and }\;\; \inf_{s>1}\{ s^\aone\gamma(s)\} >0 \;\;\text{ for some }\; \aone\ge 0,
\end{equation*} 
when $n\ge 2$, $m>n/2$, and 
\begin{equation*}
	\aone < \min\left\{ \frac{nm-2}{n-2} , \frac{2(m+1)}{n-2}\right\}.
\end{equation*}
Related chemotaxis models featuring~\eqref{ks1} with $m>1$ are considered in \cite{Ren2022} and \cite{XuWa2021}.

\medskip

We now turn to the long term behaviour of weak solutions to~\eqref{ks} and first consider the case where $\phi\gamma$ is increasing.

\begin{theorem}[convergence for increasing $\phi \gamma$]\label{thm1}
Consider two functions $\phi$ and $\gamma$ satisfying~\eqref{hypphia}, \eqref{hypgamma}, and~\eqref{hypcomp}, and assume that
\begin{equation*}
	(\phi\gamma)' > 0 \;\text{ on }\; (0,\infty). 
\end{equation*}
Let $(u,v)$ be a global weak solution to~\eqref{ks} in the sense of Definition~\ref{defin} with non-negative initial conditions $(u^{in},v^{in})$ satisfying~\eqref{ci:reg} and
\begin{equation}
	M := \frac{1}{|\Omega|} \int_\Omega u^{in}(x)\ \mathrm{d}x > 0. \label{ci}
\end{equation}
Then, for $q\in [1,2)$, and for any $\vartheta\in H^1(\Omega)$,
\begin{equation}
	\lim_{t\to\infty} \|v(t) - M\|_q = \lim_{t\to\infty} \int_t^{t+1} \int_\Omega \big( u(\tau,x) - M \big) \vartheta(x)\ ~\mathrm{d}x~\mathrm{d}\tau = 0. \label{cv1}
\end{equation}
\end{theorem}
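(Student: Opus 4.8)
The plan is to establish a Liapunov structure for~\eqref{ks} that forces the dissipation to vanish in the large time limit, and then to turn this vanishing dissipation into the convergence statements~\eqref{cv1}. First I would compute, at least formally, the time derivative of a candidate functional built from the four pieces announced in the introduction: the $(H^1)'$-norm of $u-v$, the $(H^1)'$-norm of $u$, the $L^2$-norm of $v$, and the $L^1$-norm of $\Psi(v)$. Using the equations, $\partial_t(u-v) = \Delta(\phi(u)\gamma(v)) - \Delta v + v - u$ and $\partial_t v = \Delta v - v + u$; testing with the appropriate Neumann inverse Laplacians and with $\Psi'(v)=(\phi\gamma)(v)$ respectively, one should be able to choose the coefficients of the four terms so that the bad cross-terms cancel and what survives is, up to the mass conservation $\int_\Omega u\,dx = M|\Omega|$, a manifestly nonnegative dissipation $\mathcal D(t)$. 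The key sign ingredient is that $(\phi\gamma)'>0$: combined with $\phi'>0$, this makes terms like $\int_\Omega (\phi\gamma)'(v)|\nabla v|^2$ and $\int_\Omega \phi'(u)(\phi\gamma)(v)|\nabla(\text{something})|^2$ (or their $(H^1)'$ analogues) have the right sign, and it also controls the quadratic forms coming from the $u$–$v$ coupling. The growth condition~\eqref{hypcomp} is what keeps $\Psi(v)$ integrable and the functional finite along the flow. I expect this bookkeeping — finding the exact linear combination whose derivative is a sum of squares plus a term controlled by the mass — to be the main obstacle, and it is presumably the content of Section~\ref{sec.tie}; for the proof of Theorem~\ref{thm1} I would simply invoke that construction.

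Granting the Liapunov functional $\mathcal L$ and its dissipation identity $\frac{d}{dt}\mathcal L + \mathcal D \le 0$ with $\mathcal D \ge 0$, the second step is a standard but careful compactness argument. Since $\mathcal L \ge 0$ and nonincreasing, $\mathcal L(t)$ has a limit and $\int_0^\infty \mathcal D(t)\,dt < \infty$, so $\mathcal D(t_j)\to 0$ along some sequence — but I want more: because $\mathcal L$ is bounded we get time-independent bounds on $v$ in $L^2$, on $\Psi(v)$ in $L^1$, and on $u$ and $u-v$ in $(H^1)'$; from the $v$-equation this upgrades to a uniform $H^1(t,t+1;(H^1)')$ bound and, via the integrated dissipation, an $L^2(t,t+1;H^1)$-type bound on $v$ (or on a suitable primitive), over unit time windows shifted to infinity. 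Aubin–Lions on the windows $(t,t+1)$ then gives relative compactness of the time-translates $\{v(t+\cdot)\}$ in, say, $C([0,1];L^1(\Omega))$ or $L^2((0,1)\times\Omega)$. The introduction announces a lemma in Section~\ref{sec.ltb} collecting exactly the properties of cluster points of $\{v(t)\}$ in $L^1(\Omega)$; I would use it here.

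The third step is to identify the cluster points. Let $v_\infty$ be any $L^1$-cluster point of $v(t)$ as $t\to\infty$. From $\mathcal D(t)\to 0$ along the corresponding times and the fact that $\mathcal D$ controls $\int_\Omega (\phi\gamma)'(v)|\nabla v|^2$ (which vanishes only where $\nabla v = 0$ since $(\phi\gamma)'>0$ on $(0,\infty)$), one deduces that $v_\infty$ is spatially constant; conservation of mass in the $v$-equation (integrate~\eqref{ks2} in space, use $\int_\Omega u = M|\Omega|$ and the convergence) pins the constant to $M$. Here the hypothesis $M>0$ in~\eqref{ci} is used to exclude the trivial degenerate possibility and to make the steady state $(M,M)$ the genuine attractor. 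This gives $\|v(t)-M\|_1\to 0$ along every cluster sequence, hence $\lim_{t\to\infty}\|v(t)-M\|_1 = 0$; interpolating the uniform $L^2$-bound with this $L^1$-convergence yields $\|v(t)-M\|_q \to 0$ for every $q\in[1,2)$, the first limit in~\eqref{cv1}.

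Finally, for the statement about $u$: for fixed $\vartheta\in H^1(\Omega)$ write $\int_t^{t+1}\int_\Omega (u(\tau)-M)\vartheta\,dx\,d\tau = \int_t^{t+1}\int_\Omega (u(\tau)-v(\tau))\vartheta + \int_t^{t+1}\int_\Omega (v(\tau)-M)\vartheta$. The second term tends to $0$ by the $L^1$-convergence of $v$ to $M$ just proved (and $\vartheta\in H^1(\Omega)\subset L^\infty$ or at least $L^2$ suffices after Hölder, using the uniform $L^2$-bound on $v$). For the first term I would use the $u$-equation integrated over $(t,t+1)$ in the weak $(H^1)'$ sense: $\int_t^{t+1}\langle u(\tau)-M,\vartheta\rangle\,d\tau$ relates to $\langle (u-v)(t+1)-(u-v)(t),(\text{something})\rangle$ plus a term $\int_t^{t+1}\int_\Omega (\phi\gamma)(v)\Delta\theta_\vartheta$, where $\theta_\vartheta$ solves a Neumann problem with data $\vartheta$; the boundary term is controlled by the uniform $(H^1)'$-bound on $u-v$ (a bounded quantity times itself, but evaluated at the two endpoints — here one argues along a sequence where the $(H^1)'$-norm of $u-v$ converges, which the monotone $\mathcal L$ guarantees for the relevant piece), and the $(\phi\gamma)(v)$ integral, thanks to~\eqref{hypcomp} and the $L^1$-convergence $\Psi(v)\to\Psi(M)$ extracted from the dissipation, tends to the corresponding integral of $(\phi\gamma)(M)$, which cancels against $M$ by the very definition of the Neumann corrector. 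Assembling these pieces gives the second limit in~\eqref{cv1}. The delicate point throughout is that the $u$-topology is only weak-$(H^1)'$, so every identification has to be done by testing against smooth functions and passing to the limit along time sequences where the monotone functional and the integrated dissipation cooperate; making this rigorous (rather than just formal) is where most of the work in Section~\ref{sec.ltb} goes, and for Theorem~\ref{thm1} specifically I would lean on the cluster-point lemma stated there.
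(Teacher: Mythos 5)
Your overall architecture for the $v$-part (Liapunov functional from Section~\ref{sec.tie}, compactness of the time translates, identification of cluster points via the dissipation term $\int_\Omega(\phi\gamma)'(v)|\nabla v|^2$) matches the paper, but two of your steps would not go through as written. First, the dissipation does \emph{not} give an $L^2(t,t+1;H^1)$ bound on $v$: it only controls the weighted quantity $\int(\phi\gamma)'(v)|\nabla v|^2$, and $(\phi\gamma)'$ is merely positive on $(0,\infty)$, so it may degenerate as $v\to 0$ or $v\to\infty$. The spatial compactness of $\mathcal V$ in the paper comes instead from the heat-semigroup regularization of~\eqref{ks2} (a uniform $W^{1,q}$ bound for $q<n/(n-1)$ using only $u\in L^\infty_t L^1_x$), combined with Simon's lemma. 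For the same reason, your identification step ``$(\phi\gamma)'(v)|\nabla v|^2$ small $\Rightarrow$ $\nabla v=0$ in the limit'' needs the truncation $v_j^\varepsilon=\min\{\max\{v_j,\varepsilon\},1/\varepsilon\}$ and the Poincar\'e--Wirtinger inequality, so that one only uses $\mu_\varepsilon=\min_{[\varepsilon,1/\varepsilon]}(\phi\gamma)'>0$ and controls the exceptional sets $\{v_j<\varepsilon\}$, $\{v_j>1/\varepsilon\}$ by the uniform $L^2$ bound; a pointwise or lower-semicontinuity argument is not available since $\nabla v_j$ has no strong compactness.

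The genuine gap is in your last paragraph. Splitting off $u-v$ and invoking the $u$-equation cannot work under the hypotheses of Theorem~\ref{thm1}: the flux there is $\phi(u)\gamma(v)$, not $(\phi\gamma)(v)$, and nothing in the available estimates lets you conclude that $\phi(u)\gamma(v)$ converges to $(\phi\gamma)(M)$ — the only control on the mismatch between $u$ and $v$ is the dissipation term $\int\gamma(v)(\phi(v)-\phi(u))(v-u)$, which without lower bounds on $\gamma$ and on the slope of $\phi$ (precisely the extra hypotheses of Theorems~\ref{thm2} and~\ref{thm3}) yields no quantitative information on $u-v$. This is exactly why Theorem~\ref{thm1} only asserts weak convergence of time averages of $u$. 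The correct route avoids the $u$-equation entirely: write $u-M=\partial_t(v-\langle v\rangle)-\Delta v+v-\langle v\rangle = -\Delta(\partial_t Q+v)+v-\langle v\rangle$ from~\eqref{ks2}, test against $\vartheta$, integrate by parts, and use the bound $\int_0^\infty\|\partial_t Q\|_{H^1}^2\,\mathrm{d}t<\infty$ from~\eqref{b15c} (whose tail over $(t,t+1)$ vanishes) together with the already established $L^1$-convergence of $v$ to $M$. You should replace your fourth step by this argument.
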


\begin{remark}\label{rem1} Theorem~\ref{thm1} applies to the specific choice~\eqref{pgck} of functions~$\phi$ and~$\gamma$ when $m \ge k>0$ and $s_0>0$, as already observed in numerical simulations by Choi \& Kim \cite{ChKi2023}. We refer to Section~\ref{sec:appli} for more details.
\end{remark}

While pointwise strong convergence of $v$ is achieved in Theorem~\ref{thm1}, we only obtain weak convergence on time averages of $u$. We shall now improve the convergence of $u$, assuming in particular the positivity of the initial condition $v^{in}$.

\begin{theorem}[convergence for initial positive concentrations]\label{thm2}
	Consider two functions $\phi$ and $\gamma$ satisfying~\eqref{hypphia}, \eqref{hypgamma}, and~\eqref{hypcomp}, and assume that for all $\lambda >1$, there exists $\eta_\lambda >1$ such that
		\begin{equation}
		\phi(\lambda s) \ge \eta_\lambda \phi(s), \ s> 0, \;\;\;\text{ and }\;\; (\phi\gamma)' \ge 0 \;\text{ on }\; (0,\infty). \label{nd2}
	\end{equation}
Let $(u,v)$ be a global non-negative weak solution to~\eqref{ks} in the sense of Definition~\ref{defin} with non-negative initial conditions $(u^{in},v^{in})$ satisfying~\eqref{ci:reg}, \eqref{ci}, and
\begin{equation}
	v^{in}\in C(\overline\Omega) \quad \text{with} \quad \inf_{\Omega} v^{in} > 0. \label{civ}
\end{equation}
Then, for $q\in [1,2)$,
\begin{equation*}
	\lim_{t\to\infty}  \|v(t) - M\|_q = \lim_{t\to\infty}  \int_t^{t+1} \|u(\tau)-M\|_1 \ \mathrm{d}\tau = 0. 
\end{equation*}
\end{theorem}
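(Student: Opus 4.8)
The plan is to deduce Theorem~\ref{thm2} from Theorem~\ref{thm1} by upgrading the weak convergence of the time averages of $u$ to strong convergence in $L^1$. Theorem~\ref{thm1} already gives $v(t)\to M$ in $L^q(\Omega)$ for $q\in[1,2)$ and $\int_t^{t+1}\int_\Omega(u-M)\vartheta\,\mathrm{d}x\,\mathrm{d}\tau\to 0$ for every $\vartheta\in H^1(\Omega)$; since $H^1(\Omega)$ is dense in $C(\overline\Omega)$ and the sequence of measures $\int_t^{t+1}u(\tau,\cdot)\,\mathrm{d}\tau$ has uniformly bounded mass $M|\Omega|$, this already promotes the weak convergence to weak-$*$ convergence in the sense of measures, i.e. against all $\vartheta\in C(\overline\Omega)$. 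What is missing is equi-integrability of the family $\big\{\int_t^{t+1}u(\tau,\cdot)\,\mathrm{d}\tau\ :\ t\ge 0\big\}$ (or at least the absence of concentration), which, combined with the weak-$*$ convergence to the constant $M$, would give strong $L^1$ convergence of the time averages; a final application of convexity (the map $w\mapsto\int_t^{t+1}\|w(\tau)-M\|_1\,\mathrm{d}\tau$ controlled via Jensen against $\big\|\int_t^{t+1}w(\tau)\,\mathrm{d}\tau - M\big\|_1$ only goes one way, so one really needs pointwise-in-time control) then yields the claim. The role of the hypothesis~\eqref{civ} and of the superlinearity-type condition in~\eqref{nd2} is precisely to produce this compactness.

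The key mechanism is a lower bound on $v$. The equation~\eqref{ks2} is $\partial_t v=\Delta v-v+u$ with $u\ge 0$ and Neumann boundary conditions; by the comparison principle (or Duhamel with the Neumann heat semigroup), $v(t,x)\ge e^{-t}\,\big(e^{t\Delta}v^{in}\big)(x)\ge e^{-t}\inf_\Omega v^{in}=:\delta e^{-t}$ when $\inf_\Omega v^{in}=\delta>0$. This is not bounded below uniformly in time, but one can do better: I would first establish, using the time-independent estimates from Section~\ref{sec.tie} (in particular the bound on $\sup_{t\ge 0}\|\Psi(v(t))\|_1$ and the $H^1(0,T;H^1(\Omega)')$-regularity of $v$, together with $v\in L^\infty(0,\infty;L^2(\Omega))$) and parabolic smoothing for~\eqref{ks2}, that there is $\delta_*>0$ with $v(t,x)\ge\delta_*$ for all $t\ge 1$, $x\in\overline\Omega$; the point is that on $[t,t+1]$ one has $\int_t^{t+1}\|u(\tau)\|_1\,\mathrm{d}\tau=M|\Omega|$, so $v$ cannot decay to zero on a full unit time interval without violating the conservation of mass of $u$ fed into~\eqref{ks2}, and a short-time argument localizes this. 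Once $v\ge\delta_*$, the key dissipation identity of the Liapunov functional from Section~\ref{sec.tie}, whose integrand controls a quantity of the form $\int_\Omega\big(\nabla\sqrt{\phi(u)\gamma(v)}\big)^2$-type terms (or more precisely the terms making the functional non-increasing), becomes coercive in a usable way on $u$ because $\gamma(v)$ is now bounded between two positive constants on the relevant range; in particular $\phi(u)$ inherits spatial regularity on time averages.

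The superlinearity condition $\phi(\lambda s)\ge\eta_\lambda\phi(s)$ with $\eta_\lambda>1$ is what converts control of $\phi(u)$ into control of $u$ without concentration: it forces $\phi$ to grow at least like a positive power at infinity (more precisely $\liminf_{s\to\infty}\phi(s)/s^\varepsilon>0$ for some $\varepsilon>0$ by iterating the inequality), hence $\phi(u)\in L^1$ on a time average together with a uniform $L^1$ bound on $u$ controls $u$ in a slightly better Orlicz/Lebesgue class, ruling out concentration and giving equi-integrability of $\{u(\tau)\ :\ \tau\ge 0\}$ in the sense needed. I would then argue as follows: fix a subsequence $t_j\to\infty$; by the uniform $L^1$ bound and equi-integrability, $\{u(t_j+\cdot)\}$ is relatively weakly compact in $L^1((0,1)\times\Omega)$; extract a weak-$L^1$ limit $U$; by the dissipation of the Liapunov functional tending to zero along the sequence (a consequence of the functional being bounded below and non-increasing), $U$ must be spatially constant for a.e.\ time, and by the tested-weak-convergence from Theorem~\ref{thm1} that constant is $M$; hence $u(t_j+\cdot)\rightharpoonup M$ weakly in $L^1((0,1)\times\Omega)$, and then equi-integrability upgrades this to strong convergence in $L^1((0,1)\times\Omega)$ (a weakly convergent, equi-integrable sequence converging weakly to a function it also converges to in measure converges strongly); since the subsequence was arbitrary, $\int_t^{t+1}\|u(\tau)-M\|_1\,\mathrm{d}\tau\to 0$.

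The main obstacle, and the part I expect to be most delicate, is the uniform-in-time positive lower bound $v\ge\delta_*$ on $[1,\infty)\times\overline\Omega$: the naive comparison gives only exponential decay, and recovering a genuine lower bound requires combining the mass conservation of $u$, the smoothing of the $v$-equation, and the time-independent bounds of Section~\ref{sec.tie} in a way that is uniform as $t\to\infty$ — essentially showing that $u$ cannot stay pointwise small on all of $\Omega$ over a whole unit interval, so that the source term in~\eqref{ks2} keeps $v$ away from $0$. A secondary technical difficulty is making the coercivity of the Liapunov dissipation on $u$ quantitative enough — once $\gamma(v)$ is pinched between positive constants — to conclude that weak-$L^1$ cluster points of the time-shifted $u$ are spatially homogeneous; this is where the precise form of the functional constructed in Section~\ref{sec.tie}, and the structure of its dissipation, will be used in an essential way. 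The condition $s_0>0$ plays no role here since positivity of $v^{in}$ is assumed directly; it is the alternative hypothesis in Theorem~\ref{thm0} that will need the $s_0>0$, $m\ge k/2$ route to the same lower bound on $v$.
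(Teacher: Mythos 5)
Your proposal has two genuine gaps, and its overall architecture differs from the one that actually works here.

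First, you cannot start from Theorem~\ref{thm1}: that theorem requires $(\phi\gamma)'>0$ on $(0,\infty)$, whereas Theorem~\ref{thm2} only assumes $(\phi\gamma)'\ge 0$. The proof of Theorem~\ref{thm1} identifies the cluster points of $\mathcal{V}$ by exploiting $\mu_\varepsilon=\min_{[\varepsilon,1/\varepsilon]}\{(\phi\gamma)'\}>0$ to convert the dissipation term $\int(\phi\gamma)'(v)|\nabla v|^2$ into control of $\|v-\langle v\rangle\|_1$; under the weaker hypothesis this term may vanish identically on intervals and that argument collapses. So neither $v(t)\to M$ nor the weak convergence of the time averages of $u$ is available as a starting point, and the identification $v_\infty=M$ must be re-derived. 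The paper does it through the $u$-equation: once $\int_0^1\|u_j-v_j\|_1\,\mathrm{d}\tau\to 0$ is known, the convergence~\eqref{b18.2} of $P_j$ to $Q_\infty+v_\infty-M$, combined with $-\Delta P_j=u_j-M$, forces $Q_\infty=0$ and hence $v_\infty=M$. Relatedly, your claim that once $v\ge\delta_*$ the function $\gamma(v)$ is pinched between two positive constants is false when $\gamma$ decays at infinity (e.g.\ $\gamma(s)=s^{-k}$, which is allowed); what the lower bound on $v$ buys, via the monotonicity of $\phi\gamma$, is only $(\phi\gamma)(v)\ge(\phi\gamma)(v_*)=:\omega_*>0$, and the point of the multiplicative condition $\phi(\lambda s)\ge\eta_\lambda\phi(s)$ is precisely to exploit this \emph{product} lower bound. (The lower bound on $v$ itself, which you flag as the hardest step, is a standard uniform-in-time consequence of~\eqref{ks2}, \eqref{civ}, and conservation of mass, quoted from Fujie's lemma; it does not decay exponentially.)

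Second, your compactness route for $u$ does not close. The dissipation contains no gradient of $u$ (it consists of $d_0=\int\gamma(v)(\phi(v)-\phi(u))(v-u)\,\mathrm{d}x$, $\|\nabla\partial_tQ\|_2^2$, and $\int(\phi\gamma)'(v)|\nabla v|^2\,\mathrm{d}x$), so the assertion that a weak-$L^1$ cluster point $U$ of $u(t_j+\cdot)$ must be spatially constant does not follow from the dissipation vanishing: $d_0$ is not weakly $L^1$ continuous, and weak $L^1$ convergence plus equi-integrability does not upgrade to strong convergence without convergence in measure, which you never establish. The argument that works is pointwise and quantitative: split $(0,1)\times\Omega$ into $\{u_j\ge(1+\varepsilon)v_j\}$, $\{u_j\le(1-\varepsilon)v_j\}$, and the intermediate set. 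On the first set, monotonicity of $\phi$ and the doubling condition give $\gamma(v_j)(\phi(u_j)-\phi(v_j))(u_j-v_j)\ge(\eta_{1+\varepsilon}-1)\,\omega_*\,|u_j-v_j|$, so the finiteness of $\int_0^\infty d_0\,\mathrm{d}t$ forces $\int|u_j-v_j|\to 0$ there; the second set is symmetric with the factor $(\eta_{1/(1-\varepsilon)}-1)/\eta_{1/(1-\varepsilon)}$; on the intermediate set $|u_j-v_j|\le\varepsilon v_j$ with $\|v_j\|_1$ bounded. Letting $\varepsilon\to0$ yields $\int_0^1\|u_j-v_j\|_1\,\mathrm{d}\tau\to0$ directly, giving strong $L^1$ convergence with no detour through Dunford--Pettis. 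Your intuition that~\eqref{nd2} prevents concentration of $u$ is in the right spirit, but the operative mechanism is this direct coercivity of $d_0$ relative to $|u-v|$, not an Orlicz-type equi-integrability estimate.
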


\begin{remark}\label{rem2b}
	Thanks to assumption~\eqref{civ} and to the parabolic equation~\eqref{ks2}, the function $v$ has a uniform positive lower bound $v_*$, which we recall in~\eqref{vstar} below. Therefore, $(u,v)$ is also a solution to~\eqref{ks} with $\gamma$ replaced by any function $\tilde\gamma$ that coincides with $\gamma$ on $[v_*,\infty)$. In particular, we can relax \eqref{hypgamma}, so that the result of Theorem~\ref{thm2} holds true for any positive $\gamma \in C^1((0,\infty))$ without assuming the continuity of $\gamma$ at $s=0$.
\end{remark}

\begin{remark}\label{rem2} Theorem~\ref{thm2} applies to the specific choice~\eqref{pgck} of functions~$\phi$ and~$\gamma$ under the mere condition $(\phi \gamma)'\ge0$, see Section~\ref{sec:appli}. This includes in particular the case $s_0=0$, where the condition $(\phi \gamma)'\ge 0$ translates into $m\ge k>0$. The possibility of allowing $s_0$ to vanish, which corresponds to a singular motility $\gamma$, is clearly due to the assumed positivity~ \eqref{civ} of the initial concentration $v^{in}$, as already pointed out in Remark~\ref{rem2b}. Theorem~\ref{thm2} also applies to the choice $\phi(s)=e^{As}-1$ with $A>0$ and $\gamma(s)=s^{-k}$ with $k\in (0,1]$.
\end{remark}

The final result applies to a more restricted class of functions~$\phi$ and~$\gamma$, as it requires a lower algebraic bound on the slope of $\phi$, as well as an upper algebraic bound on $\frac1\gamma$. It however does not rely on the positivity of $v^{in}$ like Theorem~\ref{thm2}, while still providing a strong convergence of time averages of $u$ but now in $L^p(\Omega)$ for some $p\in (1,2)$.

\begin{theorem}[improved convergence]\label{thm3}
	Consider two functions $\phi$ and $\gamma$ satisfying~\eqref{hypphia}, \eqref{hypgamma}, and~\eqref{hypcomp}, and 
	\begin{equation*}
		(\phi\gamma)' \ge 0 \;\text{ on }\; (0,\infty). 
	\end{equation*}
	Assume that there exist $\theta\ge0$, $\atwo\ge1$, $m>0$, and $\aone> 0$ satisfying
	\begin{equation*}
		2 \atwo \ge \theta+\aone, \qquad \text{and}  \qquad m \ge \frac{\aone}{2}, 
	\end{equation*}
	and $(K_\phi,K_\gamma)\in (0,\infty)^2$ such that
	\begin{equation}
		\frac{\phi(r)-\phi(s)}{r-s}
		\ge K_\phi \left[ \mathbf{1}_{(\frac12,2)}\left(\frac rs\right) \frac{|r-s|^{\atwo-1}}{(1+\max\{r,s\})^{\theta}}  + \mathbf{1}_{[0,\frac12]\cup[2,\infty)}\left(\frac rs\right) |r-s|^{m-1}\right] \label{nd4}
	\end{equation}
	for $(r,s)\in (0,\infty)^2$, $r\ne s$, and
	\begin{equation}
	 \frac1{\gamma(s)}\le K_\gamma ( 1+ s)^{\aone},  \qquad s> 0. \label{gac1}
	 \end{equation}
Let $(u,v)$ be a global non-negative weak solution to~\eqref{ks} in the sense of Definition~\ref{defin} below with non-negative initial conditions $(u^{in},v^{in})$ satisfying~\eqref{ci:reg} and~\eqref{ci}.

Then, for $q\in [1,2)$ and $p\in\left [1,\min\left(2\frac{\atwo+1}{\aone+\theta+2},2\frac{m+1}{\aone+2}\right)\right]$ with $p<2$,
\begin{equation*}
	\lim_{t\to\infty} \|v(t) - M\|_q = \lim_{t\to\infty} \int_t^{t+1} \|u(\tau)-M\|_p^p\ \mathrm{d}\tau = 0. 
\end{equation*}
\end{theorem}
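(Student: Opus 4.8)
The plan is to deduce Theorem~\ref{thm3} from Theorem~\ref{thm1} by upgrading the weak convergence of time averages of $u$ in $H^1(\Omega)'$ to strong convergence in $L^p(\Omega)$, using the structural bounds~\eqref{nd4} on $\phi$ and~\eqref{gac1} on $1/\gamma$, together with the time-independent estimates and the dissipation of the Liapunov functional established in Section~\ref{sec.tie}. Note first that the assumption~\eqref{nd4} forces $\phi(\lambda s)\ge\eta_\lambda\phi(s)$ for every $\lambda>1$ (integrating the lower bound on the difference quotient on the dyadic-separated branch gives a genuine superadditivity-type gain), so in particular $(\phi\gamma)'>0$ is not assumed but the monotonicity $(\phi\gamma)'\ge 0$ suffices here since we will not need the strict version: the weak convergence
\[
	\lim_{t\to\infty}\int_t^{t+1}\int_\Omega\big(u(\tau,x)-M\big)\vartheta(x)\,\mathrm{d}x\,\mathrm{d}\tau=0,\qquad\vartheta\in H^1(\Omega),
\]
and $\|v(t)-M\|_q\to 0$ will be obtained exactly as in the proof of Theorem~\ref{thm1}, or by a slight adaptation when only $(\phi\gamma)'\ge0$. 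So the whole content is the strong $L^p$ statement for $u$.

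The key mechanism is the following. The dissipation term in the Liapunov functional controls, in an integrated-in-time sense, a quantity of the form $\int_\Omega\gamma(v)\big(\phi(u)-\phi(M')\big)(u-M')$ or similar — more precisely, after using $\|v(t)-M\|_q\to0$ and the uniform lower bound coming from~\eqref{hypphia}--\eqref{hypgamma} one expects the time-average over $(t,t+1)$ of $\int_\Omega\big(\phi(u)-\phi(M)\big)(u-M)$ (weighted by $\gamma(v)$, but $\gamma(v)\ge\gamma$-lower-bound on the relevant range since $v$ is bounded below after a waiting time or via~\eqref{gac1}) to tend to zero. Feeding~\eqref{nd4} into $\big(\phi(u)-\phi(M)\big)(u-M)$ splits the estimate into the ``close'' region $\{u/M\in(1/2,2)\}$, where one gets a lower bound $K_\phi|u-M|^{\atwo+1}(1+u)^{-\theta}$, and the ``far'' region $\{u/M\in[0,1/2]\cup[2,\infty)\}$, where one gets $K_\phi|u-M|^{m+1}$. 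Combining these with the time-uniform bound $u\in L^\infty(0,\infty;L^1(\Omega))$ (which gives, via interpolation/Hölder with the weight $(1+u)^{-\theta}$, control of $\|u-M\|_p^p$ for $p\le 2(\atwo+1)/(\aone+\theta+2)$ on the close region once~\eqref{gac1} is used to relate the $\gamma(v)$ weight to powers of $(1+v)$ and hence, after the $v\to M$ step, to a constant) and $p\le 2(m+1)/(\aone+2)$ on the far region, one concludes that $\int_t^{t+1}\|u(\tau)-M\|_p^p\,\mathrm{d}\tau\to0$. The role of the constraints $2\atwo\ge\theta+\aone$ and $m\ge\aone/2$ is precisely to guarantee that the corresponding exponents $p$ can be taken $\ge1$ while staying $<2$.

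The main obstacle will be the second step: making rigorous the passage from ``the $\gamma(v)$-weighted dissipation integral tends to zero'' to ``$\int_t^{t+1}\|u-M\|_p^p\to0$''. This requires, first, identifying exactly which weighted $(u,v)$-functional is dissipated by the Liapunov functional of Section~\ref{sec.tie} and extracting from its finiteness that its time-average over unit windows vanishes; second, handling the $\gamma(v)$ weight, which is only bounded below away from zero where $v$ is bounded above — so one must combine the $L^q$ convergence $v(t)\to M$ with the a priori bound $\Psi(v)\in L^\infty(0,\infty;L^1(\Omega))$ and~\eqref{gac1} to absorb the regions where $v$ is large; and third, carrying out the Hölder/interpolation bookkeeping on each of the two regions in~\eqref{nd4} with the correct weights $(1+\max\{r,s\})^\theta$ and $(1+s)^{\aone}$, checking that the admissible range of $p$ is exactly $\big[1,\min\{2(\atwo+1)/(\aone+\theta+2),\,2(m+1)/(\aone+2)\}\big)\cap[1,2)$. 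The convergence $\|v(t)-M\|_q\to0$ itself is inherited from (the proof of) Theorem~\ref{thm1} and poses no new difficulty; likewise the final sentence of the theorem — the improved range of $q$ when $m>k+1$ for the specific $\phi,\gamma$ — is not part of Theorem~\ref{thm3} but of Theorem~\ref{thm0} and will follow in Section~\ref{sec:appli} from the better integrability of $\Psi(v)$ in that regime.
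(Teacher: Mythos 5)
Your outline contains the right ingredients (the dissipation bound, the two-region splitting dictated by~\eqref{nd4}, and a H\"older argument combining~\eqref{gac1} with the uniform $L^2$ bound on $v$ to absorb the weight $\gamma(v)(1+v)^{-\theta}$ — this is exactly how the paper obtains the admissible exponents $p_\atwo=2(\atwo+1)/(\aone+\theta+2)$ and $p_m=2(m+1)/(\aone+2)$). But there are two genuine gaps. First, the logical order is wrong: you propose to obtain $\|v(t)-M\|_q\to0$ ``exactly as in the proof of Theorem~\ref{thm1}, or by a slight adaptation,'' and then use it to upgrade $u$. Theorem~\ref{thm1} requires $(\phi\gamma)'>0$ strictly; its identification of the cluster point rests on $\mu_\varepsilon=\min_{[\varepsilon,1/\varepsilon]}(\phi\gamma)'>0$, which fails under the mere hypothesis $(\phi\gamma)'\ge0$ of Theorem~\ref{thm3} (the term $d_2=\int(\phi\gamma)'(v)|\nabla v|^2$ may carry no information where $(\phi\gamma)'$ vanishes). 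This is not a slight adaptation but the crux: in the paper the implication runs the other way. One first proves the time-averaged strong control $\int_0^1\|u_j-v_j\|_p^p\,\mathrm{d}\tau\to0$ (hence~\eqref{d5}) directly from the dissipation, and only then identifies $v_\infty=M$ via the elliptic relation $-\Delta P_j=u_j-M$ together with~\eqref{b18.2}, exactly as in the proof of Theorem~\ref{thm2}; the convergence~\eqref{b28} of $v$ is a consequence, not an input.

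Second, you feed~\eqref{nd4} into $(\phi(u)-\phi(M))(u-M)$ and split according to the ratio $u/M$. The dissipation term $d_0$ bounded by~\eqref{b15b} is $\int_\Omega\gamma(v)\bigl(\phi(v)-\phi(u)\bigr)(v-u)\,\mathrm{d}x$: it compares $u$ with $v$ pointwise, not with $M$. Replacing $v$ by $M$ inside this nonlinear expression using only $\|v(t)-M\|_q\to0$ for $q<2$ is not justified (and, per the first point, that convergence is not yet available at this stage). The correct bookkeeping keeps $u-v$ throughout, splits according to $u/v$ (so the indicator in~\eqref{nd4} is applied with $r=u$, $s=v$, and the weight $(1+\max\{u,v\})^{-\theta}$ is bounded below by $2^{-\theta}(1+v)^{-\theta}$ on the close region), and only at the very last step writes $u-M=(u-v)+(v-M)$. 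With these two corrections your scheme matches the paper's proof.
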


\begin{remark} \label{rem3}
Theorem~\ref{thm3} again applies to the specific choice~\eqref{pgck} of functions~$\phi$ and~$\gamma$, see Section~\ref{sec:appli}. As already mentioned in Remark~\ref{rem2b}, in the case where $v^{in}$ verifies~\eqref{civ} one can relax the continuity assumption of $\gamma$ at $s=0$.
\end{remark}

\paragraph{\bf Notation}

Given $f\in H^1(\Omega)'$, we define
\begin{equation*}
	\langle f \rangle := \frac{1}{|\Omega|} \langle f , 1 \rangle_{(H^1)',H^1}, 
\end{equation*} 
and we note that, if $f\in H^1(\Omega)'\cap L^1(\Omega)$, then
\begin{equation*}
	\langle f \rangle = \frac{1}{|\Omega|} \int_\Omega f(x)\ \mathrm{d}x.
\end{equation*}
For $f\in H^1(\Omega)'$ with $\langle f \rangle =0$, let $\mathcal{K}[f]\in H^1(\Omega)$ be the unique (variational) solution to 
\begin{subequations}\label{opK}
	\begin{equation}
		- \Delta\mathcal{K}[f] = f \;\;\text{ in }\;\; \Omega, \quad \nabla \mathcal{K}[f] \cdot \mathbf{n} = 0 \;\;\text{ on }\;\; \partial\Omega, \label{opKa}
	\end{equation}
	satisfying
	\begin{equation}
		\langle \mathcal{K}[f]  \rangle = 0. \label{opKb}
	\end{equation}
\end{subequations}

\section{Time-independent estimates}\label{sec.tie}

We first compute the time evolution of $\langle u\rangle$ and $\langle v \rangle$ which are both explicit and easy to obtain, as well as a classical duality estimate which takes the form of a differential identity satisfied by $\|\nabla\mathcal{K}[u-M]\|_2^2$.

\begin{lemma}\label{lemb01}
	For $t\ge 0$, 
	\begin{align}
		\langle u(t)\rangle & = M = \langle u^{in}\rangle, \label{b01a} \\
		\langle v(t) \rangle & = M + (\langle v^{in}\rangle - M) e^{-t}. \label{b01b}
	\end{align}
Also, 
\begin{equation}
	\frac{1}{2} \frac{\mathrm{d}}{\mathrm{d}t} \|\nabla P\|_2^2 = \int_\Omega (M-u) \phi(u) \gamma(v)\ \mathrm{d}x, \label{b101}
\end{equation}
where
\begin{equation}
	P := \mathcal{K}[u-M]. \label{auxf1}
\end{equation}
\end{lemma}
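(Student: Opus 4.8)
The plan is to derive the three assertions by inserting suitable test functions into the weak formulation of Definition~\ref{defin} and by exploiting the linearity and symmetry of the operator~$\mathcal{K}$.

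For~\eqref{b01a} and~\eqref{b01b} I would use test functions independent of the space variable. Given $\chi\in C^\infty_c([0,\infty))$, the choice $\varphi(t,x)=\chi(t)$ is admissible and satisfies $\Delta\varphi=0$, so the weak formulation of~\eqref{ks1} collapses to an identity stating that $t\mapsto\int_\Omega u(t,x)\,\mathrm{d}x$ has vanishing distributional derivative on $(0,\infty)$ with trace $\int_\Omega u^{in}\,\mathrm{d}x$ at $t=0$; hence it is constant, which is~\eqref{b01a} after dividing by $|\Omega|$. With $m(t):=\int_\Omega u(t,x)\,\mathrm{d}x\equiv|\Omega|M$ now known, the same test function in the weak formulation of~\eqref{ks2} (where $\varphi-\Delta\varphi=\chi$) identifies $w(t):=\int_\Omega v(t,x)\,\mathrm{d}x$ as the weak solution on $(0,\infty)$ of the linear ODE $w'+w=|\Omega|M$ with $w(0)=|\Omega|\langle v^{in}\rangle$; solving it and dividing by $|\Omega|$ gives~\eqref{b01b}.

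For the duality identity~\eqref{b101} I would start from the representation $\|\nabla P(t)\|_2^2=\langle u(t)-M,\mathcal{K}[u(t)-M]\rangle$, a direct consequence of the definition~\eqref{opK} of~$\mathcal{K}$ and of~\eqref{b01a}. Differentiating in time, using that $\mathcal{K}$ is linear and $t$-independent together with the symmetry $\langle f,\mathcal{K}[g]\rangle=\langle g,\mathcal{K}[f]\rangle$ valid for mean-zero $f,g$, gives $\tfrac12\frac{\mathrm{d}}{\mathrm{d}t}\|\nabla P\|_2^2=\langle\partial_t u,P\rangle$. Inserting the weak equation $\partial_t u=\Delta\big(\phi(u)\gamma(v)\big)$ and integrating by parts twice, using $\nabla\big(\phi(u)\gamma(v)\big)\cdot\mathbf{n}=0$ from~\eqref{ks3} and $\nabla P\cdot\mathbf{n}=0$ from~\eqref{opK}, turns the right-hand side into $\int_\Omega\phi(u)\gamma(v)\,\Delta P\,\mathrm{d}x$; since $-\Delta P=u-M$ this equals $\int_\Omega(M-u)\phi(u)\gamma(v)\,\mathrm{d}x$, which is~\eqref{b101}. (Equivalently, $\Delta\big(\partial_t P+\phi(u)\gamma(v)\big)=0$ with homogeneous Neumann data, so $\partial_t P+\phi(u)\gamma(v)$ is a function of $t$ alone, whence $\tfrac12\frac{\mathrm{d}}{\mathrm{d}t}\|\nabla P\|_2^2=\int_\Omega(u-M)\partial_t P\,\mathrm{d}x=-\int_\Omega(u-M)\phi(u)\gamma(v)\,\mathrm{d}x$ because $\int_\Omega(u-M)\,\mathrm{d}x=0$.)

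The point requiring the most care is the rigorous justification of the time differentiation behind~\eqref{b101}: the available regularity only provides $u\in L^\infty(0,T;L^1(\Omega)\cap H^1(\Omega)')$ and $\phi(u)\gamma(v)\in L^1((0,T)\times\Omega)$, so $P$ is merely bounded with values in $H^1(\Omega)$ and $\partial_t u$ need not take values in $H^1(\Omega)'$. I would resolve this by approximation: testing~\eqref{ks1} with $\varphi(t,x)=\chi(t)\psi(x)$, $\psi\in C^2(\overline\Omega)$ with $\nabla\psi\cdot\mathbf{n}=0$, shows that $t\mapsto\langle u(t),\psi\rangle$ is locally absolutely continuous with derivative $\int_\Omega\phi(u)\gamma(v)\,\Delta\psi\,\mathrm{d}x$; approximating $P(t)$ by such functions (for instance by truncating the spectral decomposition of the Neumann Laplacian, or after mollifying $u$ in time), writing $\|\nabla P(t)\|_2^2-\|\nabla P(s)\|_2^2$ as an integral over $[s,t]$, and passing to the limit with the help of the $L^1$-bound on $\phi(u)\gamma(v)$ and the continuity in time of the pairings involved yields~\eqref{b101}. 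The algebra above is elementary; it is this limiting argument that carries the technical weight.
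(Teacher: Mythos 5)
Your proposal is correct and follows essentially the same route as the paper: the first two identities come from integrating (testing with spatially constant functions) the equations over $\Omega$ and solving the resulting ODE for $\langle v\rangle$, and the duality identity comes from the same chain $\tfrac12\frac{\mathrm{d}}{\mathrm{d}t}\|\nabla P\|_2^2=\int_\Omega P\,\partial_t u\,\mathrm{d}x=\int_\Omega\phi(u)\gamma(v)\,\Delta P\,\mathrm{d}x=\int_\Omega(M-u)\phi(u)\gamma(v)\,\mathrm{d}x$ that the paper uses. Your additional discussion of how to justify the time differentiation rigorously goes beyond the paper, whose computation is purely formal, but it does not change the argument.
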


\begin{proof}
	The conservation of mass~\eqref{b01a} readily follows from~\eqref{ks1} and~\eqref{ks3}, while integration of~\eqref{ks2} over $\Omega$ gives
	\begin{equation}
		\frac{\mathrm{d}}{\mathrm{d}t} \langle v\rangle + \langle v\rangle = M, \qquad t\ge 0. \label{b102}
	\end{equation}
	We then solve~\eqref{b102} to obtain~\eqref{b01b}.
	
	We next infer from~\eqref{ks1} and~\eqref{auxf1} that 
	\begin{align*}
		\frac{1}{2} \frac{\mathrm{d}}{\mathrm{d}t} \|\nabla P\|_2^2 & = \int_\Omega \nabla P\cdot \nabla\partial_t P \ \mathrm{d}x = - \int_\Omega P \Delta\partial_t P\ \mathrm{d}x \\
		& = \int_\Omega P \partial_t u\ \mathrm{d}x = \int_\Omega \phi(u)\gamma(v) \Delta P\ \mathrm{d}x \\
		& = \int_\Omega (M-u) \phi(u) \gamma(v)\ \mathrm{d}x,
	\end{align*}
as claimed.
\end{proof}

We next construct a functional which turns out to be a Liapunov functional for~\eqref{ks} only when $\langle v^{in} \rangle \ge M$, but still provides valuable information otherwise. 

\begin{lemma}\label{lemb02}
	We set 
	\begin{equation}
		Q := \mathcal{K}[v-\langle v\rangle], \quad R := P-Q = \mathcal{K}[u-M-v+\langle v\rangle], \label{auxf2}
	\end{equation}
and define 
\begin{align*}
	\ell_0(u,v) & := \|\nabla R\|_2^2\ge 0, \quad \ell_1(u,v) := \|v\|_2^2 - |\Omega| \langle v\rangle^2 \ge 0, \quad \ell_2(u,v) := 2 \int_\Omega \Psi(v)\ \mathrm{d}x \ge 0, \\
	d_0(u,v) & := \int_\Omega \gamma(v) \big[ \phi(v) - \phi(u) \big] (v-u)\ \mathrm{d}x \ge 0,  \quad d_1(u,v) := \|\nabla\partial_t Q\|_2^2\ge 0, \\
	d_2(u,v) & := \int_\Omega (\phi\gamma)'(v) |\nabla v|^2\ \mathrm{d}x.
\end{align*}
Introducing
\begin{equation*}
	\mathcal{L}_0(u,v) := \sum_{j=0}^2 \ell_j(u,v) \ge 0, \qquad \mathcal{D}_0(u,v) := \sum_{j=0}^2 d_j(u,v),
\end{equation*}
there holds
	\begin{equation}
		\frac{1}{2} \frac{\mathrm{d}}{\mathrm{d}t} \mathcal{L}_0(u,v) + \mathcal{D}_0(u,v) = (M-\langle v\rangle) \int_\Omega \phi(u) \gamma(v)\ \mathrm{d}x. \label{b100}
	\end{equation}
\end{lemma}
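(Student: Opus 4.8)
The plan is to compute the time derivative of each piece of $\mathcal{L}_0$ separately and add the results, arranging all the cross terms so that they combine into the nonnegative dissipation $\mathcal{D}_0$ plus the single remainder on the right-hand side of~\eqref{b100}. The underlying mechanism is that the equation for $u-v$, obtained by subtracting~\eqref{ks2} from~\eqref{ks1}, has the form $\partial_t(u-v) = \Delta(\phi(u)\gamma(v)) - \Delta v + v - u$, and testing this against $R = \mathcal{K}[u-M-v+\langle v\rangle]$ will produce the "good" term $d_0$ through the monotonicity of $\phi$.

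First I would treat $\ell_0 = \|\nabla R\|_2^2$. Since $R = \mathcal{K}[(u-v)-(M-\langle v\rangle)]$ and $\frac{d}{dt}(M-\langle v\rangle) = \langle v\rangle - M$ is spatially constant (hence annihilated by $\mathcal{K}$ up to the zero-average normalization), we have $\partial_t R = \mathcal{K}[\partial_t(u-v)]$, and as in the proof of~\eqref{b101},
\begin{equation*}
	\frac12\frac{d}{dt}\|\nabla R\|_2^2 = \int_\Omega R\,\partial_t(u-v)\,\mathrm{d}x = \int_\Omega \big(\phi(u)\gamma(v) - v\big)\Delta R\,\mathrm{d}x + \int_\Omega R\,(v-u)\,\mathrm{d}x .
\end{equation*}
Now $\Delta R = -(u-M-v+\langle v\rangle) = (v-u) + (M-\langle v\rangle)$, so the first integral splits into $\int_\Omega(\phi(u)\gamma(v)-v)(v-u)\,\mathrm{d}x$ plus $(M-\langle v\rangle)\int_\Omega(\phi(u)\gamma(v)-v)\,\mathrm{d}x$; the latter, using $\int_\Omega v = |\Omega|\langle v\rangle$ and the fact that $(M-\langle v\rangle)|\Omega|\langle v\rangle$ is exactly cancelled by a term appearing below from $\ell_1$, is what ultimately feeds the right-hand side $(M-\langle v\rangle)\int_\Omega \phi(u)\gamma(v)\,\mathrm{d}x$. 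For the term $\int_\Omega(\phi(u)\gamma(v)-v)(v-u)\,\mathrm{d}x + \int_\Omega R(v-u)\,\mathrm{d}x$, I would add and subtract $\phi(v)\gamma(v)$ to write $\phi(u)\gamma(v)(v-u) = \gamma(v)[\phi(u)-\phi(v)](v-u) + \phi(v)\gamma(v)(v-u) = -d_0\text{-density} + (\phi\gamma)(v)(v-u)$, so this contributes $-d_0(u,v)$ plus $\int_\Omega[(\phi\gamma)(v) - v + R](v-u)\,\mathrm{d}x$.

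Next I would differentiate $\ell_1 = \|v\|_2^2 - |\Omega|\langle v\rangle^2$: using~\eqref{ks2}, $\frac12\frac{d}{dt}\|v\|_2^2 = -\|\nabla v\|_2^2 - \|v\|_2^2 + \int_\Omega uv\,\mathrm{d}x$, and $\frac12\frac{d}{dt}|\Omega|\langle v\rangle^2 = |\Omega|\langle v\rangle(M-\langle v\rangle)$ by~\eqref{b102}; and $\ell_2 = 2\int_\Omega\Psi(v)$ gives, since $\Psi'=\phi\gamma$, $\frac12\frac{d}{dt}\ell_2 = \int_\Omega(\phi\gamma)(v)\partial_t v\,\mathrm{d}x = \int_\Omega(\phi\gamma)(v)(\Delta v - v + u)\,\mathrm{d}x = -\int_\Omega(\phi\gamma)'(v)|\nabla v|^2\,\mathrm{d}x + \int_\Omega(\phi\gamma)(v)(u-v)\,\mathrm{d}x = -d_2(u,v) - \int_\Omega(\phi\gamma)(v)(v-u)\,\mathrm{d}x$. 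Observe that the $(\phi\gamma)(v)(v-u)$ term here precisely cancels the $(\phi\gamma)(v)(v-u)$ term left over from $\ell_0$. What remains from $\ell_0,\ell_2$ and the gradient/$uv$ terms from $\ell_1$ is $\int_\Omega[R - v](v-u)\,\mathrm{d}x - \|\nabla v\|_2^2 - \|v\|_2^2 + \int_\Omega uv\,\mathrm{d}x - d_0 - d_2$; the term $\int_\Omega(R-v)(v-u)$ combined with $-\|\nabla v\|_2^2$ is where $d_1 = \|\nabla\partial_t Q\|_2^2$ enters. Indeed, $\partial_t Q = \mathcal{K}[\partial_t v - \frac{d}{dt}\langle v\rangle]$, and testing $\partial_t v = \Delta v - v + u$ against $Q$ (and against $v$) should convert $-\|\nabla v\|_2^2$ plus the residual bilinear terms in $v,u,R=P-Q$ into $\|\nabla\partial_t Q\|_2^2$. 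The main obstacle — and the place demanding care — is this bookkeeping: verifying that after all substitutions the non-sign-definite terms regroup exactly into $\|\nabla\partial_t Q\|_2^2$ and that no spurious term survives beyond $(M-\langle v\rangle)\int_\Omega\phi(u)\gamma(v)\,\mathrm{d}x$. I would organize it by expressing everything in terms of $\partial_t v$, $u-v$, $P$, $Q$, repeatedly using $-\Delta P = u-M$, $-\Delta Q = v - \langle v\rangle$, integration by parts, and the identity $\int_\Omega \nabla P\cdot\nabla\partial_t Q = \int_\Omega(u-M)\partial_t Q = \int_\Omega \nabla\partial_t P\cdot\nabla Q$, tracking the $\langle v\rangle$-dependent constants at each stage; the nonnegativity of $d_0$ follows from monotonicity of $\phi$ and positivity of $\gamma$, and that of $\ell_0,\ell_1,\ell_2,d_1$ is immediate, while $d_2$ need not be signed, consistent with the statement.
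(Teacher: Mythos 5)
Your plan is essentially the paper's proof, merely organized term by term instead of grouping $\|\nabla R\|_2^2+\|v\|_2^2$ at the outset: the three key ingredients --- the identity $\partial_t Q=R-v+\langle v\rangle$ (your $\partial_t Q=\mathcal{K}[\partial_t v-\tfrac{\mathrm{d}}{\mathrm{d}t}\langle v\rangle]$), the splitting $\phi(u)\gamma(v)(v-u)=-\gamma(v)[\phi(v)-\phi(u)](v-u)+(\phi\gamma)(v)(v-u)$, and the absorption of $(\phi\gamma)(v)(v-u)$ by $\tfrac{\mathrm{d}}{\mathrm{d}t}\int_\Omega\Psi(v)\,\mathrm{d}x$ modulo $d_2$ --- are all present, and the residual terms do regroup exactly into $-\|\nabla\partial_t Q\|_2^2$ as you anticipate. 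One small correction to your bookkeeping: the term $-(M-\langle v\rangle)|\Omega|\langle v\rangle$ produced by $\ell_0$ is \emph{not} cancelled by the contribution $-|\Omega|\langle v\rangle(M-\langle v\rangle)$ of $\ell_1$; the two have the same sign and add to $-2(M-\langle v\rangle)|\Omega|\langle v\rangle$, which is then absorbed into $-\|\nabla\partial_t Q\|_2^2$ via the cross term $2\int_\Omega\nabla R\cdot\nabla v\,\mathrm{d}x=-2\int_\Omega v(v-u)\,\mathrm{d}x-2(M-\langle v\rangle)|\Omega|\langle v\rangle$, after which the identity closes with no spurious remainder.
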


\begin{proof}
We first observe that, by~\eqref{ks2} and~\eqref{auxf2},
\begin{equation*}
	\partial_t \big( - \Delta Q + \langle v \rangle \big) - \Delta v = u - v = - \Delta R + M - \langle v \rangle  = -\Delta R + \partial_t \langle v \rangle \;\;\text{ in }\;\; (0,\infty)\times\Omega.
\end{equation*}
Hence, $- \Delta \big( \partial_t Q+v -R \big) = 0$ in $(0,\infty)\times\Omega$, from which we deduce that 
\begin{equation}
	\partial_t Q + v - R = \langle v \rangle \;\;\text{ in }\;\; (0,\infty)\times\Omega. \label{b02} 
\end{equation}
It follows from~\eqref{ks}, \eqref{auxf2}, and~\eqref{b02} that 
\begin{align*}
	\frac{1}{2} \frac{\mathrm{d}}{\mathrm{d}t} \left( \|\nabla R\|_2^2 + \|v\|_2^2 \right) & = \int_\Omega \left( \nabla R\cdot \nabla\partial_t R + v \partial_t v \right)\ \mathrm{d}x = \int_\Omega \left( - R \Delta\partial_t R + v \partial_t v \right)\ \mathrm{d}x \\
	& = \int_\Omega \left( R \partial_t u - R \partial_t v + R \partial_t \langle v \rangle + v \partial_t v \right)\ \mathrm{d}x \\
	& = \int_\Omega \phi(u) \gamma(v) \Delta R\ \mathrm{d}x + \int_\Omega (v-R) \partial_t v\ \mathrm{d}x \\
	& = \int_\Omega \phi(u) \gamma(v) \left( - u + M + v - \langle v \rangle \right)\ \mathrm{d}x + \int_\Omega \left( \langle v\rangle - \partial_t Q \right) \partial_t v\ \mathrm{d}x \\
	& = \int_\Omega \phi(u) \gamma(v) (v-u)\ \mathrm{d}x + (M - \langle v\rangle) \int_\Omega \phi(u) \gamma(v)\ \mathrm{d}x \\
	& \qquad + \frac{|\Omega|}{2} \frac{\mathrm{d}}{\mathrm{d}t} \langle v\rangle^2 - \int_\Omega \partial_t Q \partial_t (v - \langle v\rangle)\ \mathrm{d}x.
\end{align*}
Since 
\begin{equation*}
	\int_\Omega \partial_t Q \partial_t (v - \langle v\rangle)\ \mathrm{d}x = - \int_\Omega \partial_t Q \Delta\partial_t Q\ \mathrm{d}x = \|\nabla\partial_t Q\|_2^2,
\end{equation*}
we obtain the identity
\begin{equation}
	\begin{split}
	\frac{1}{2} \frac{\mathrm{d}}{\mathrm{d}t} \left( \|\nabla R\|_2^2 + \|v\|_2^2 - |\Omega| \langle v\rangle^2\right) & = \int_\Omega \phi(u) \gamma(v) (v-u)\ \mathrm{d}x \\
	& \qquad + (M - \langle v\rangle) \int_\Omega \phi(u) \gamma(v)\ \mathrm{d}x - \|\nabla\partial_t Q\|_2^2.
	\end{split} \label{b03}
\end{equation}
Next, using~\eqref{ks2},
\begin{align*}
	\int_\Omega \phi(u) \gamma(v) (v-u)\ \mathrm{d}x & = \int_\Omega \big[ \phi(u) - \phi(v) \big] \gamma(v) (v-u)\ \mathrm{d}x + \int_\Omega (\phi \gamma)(v) (v-u)\ \mathrm{d}x \\ 
	& = - \int_\Omega \gamma(v) \big[ \phi(v) - \phi(u) \big] (v-u)\ \mathrm{d}x + \int_\Omega (\phi \gamma)(v) (\Delta v - \partial_t v)\ \mathrm{d}x \\
	& = - \int_\Omega \gamma(v) \big[ \phi(v) - \phi(u) \big] (v-u)\ \mathrm{d}x - \int_\Omega (\phi \gamma)'(v) |\nabla v|^2\ \mathrm{d}x \\
	& \qquad - \frac{\mathrm{d}}{\mathrm{d}t} \int_\Omega \Psi(v)\ \mathrm{d}x.
\end{align*}
Combining~\eqref{b03} and the above inequality, we find~\eqref{b100} and we are left with identifying the signs of the various terms involved in the definition of $\mathcal{L}_0$ and $\mathcal{D}_0$. The non-negativity of $\ell_0$ and $d_1$ is obvious, while that of $\ell_1$ readily follows from Jensen's inequality
\begin{equation*}
	|\Omega| \langle z \rangle^2 \le \|z\|_2^2, \qquad z\in L^2(\Omega),
\end{equation*}
and that of $d_0$ from the monotonicity~\eqref{hypphia} of $\phi$ and the non-negativity~\eqref{hypgamma} of $\gamma$. Finally, the non-negativity of $\ell_2$ is a straightforward consequence of the non-negativity~\eqref{hypphia} and~\eqref{hypgamma} of $\phi$ and $\gamma$ and the definition~\eqref{Psi} of $\Psi$.
\end{proof}

We now split the analysis according to the sign of $M-\langle v^{in}\rangle$ and begin with the case $\langle v^{in}\rangle\ge M$.

\begin{proposition}\label{propb03}
	Assume that $\phi\gamma$ is non-decreasing and that 
	\begin{equation}
		\langle v^{in}\rangle \ge M. \label{b04}
	\end{equation}
Then
\begin{align}
	0 & \le \mathcal{L}_0(u(t),v(t)) \le \mathcal{L}_0(u^{in},v^{in}), \qquad t\ge 0, \label{b08a} \\
	0 & \le \int_0^\infty \mathcal{D}_0(u(t),v(t))\ \mathrm{d}t \le \frac{1}{2} \mathcal{L}_0(u^{in},v^{in}). \label{b08b}
\end{align}
\end{proposition}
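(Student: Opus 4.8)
The plan is to derive both estimates directly from the differential identity~\eqref{b100} of Lemma~\ref{lemb02}. The key observation is that, under the assumptions of the proposition, the right-hand side of~\eqref{b100} is non-positive: indeed, $\phi(u)\gamma(v)\ge 0$ by~\eqref{hypphia} and~\eqref{hypgamma}, while $M-\langle v(t)\rangle = (M-\langle v^{in}\rangle)e^{-t}\le 0$ for all $t\ge 0$ thanks to~\eqref{b01b} and the hypothesis~\eqref{b04}. Moreover, the monotonicity of $\phi\gamma$ guarantees that $d_2(u,v) = \int_\Omega (\phi\gamma)'(v)|\nabla v|^2\,\mathrm{d}x\ge 0$, which is the last ingredient needed to conclude that $\mathcal{D}_0(u,v)\ge 0$ (the non-negativity of $d_0$ and $d_1$ was already recorded in Lemma~\ref{lemb02}, and likewise $\mathcal{L}_0\ge 0$). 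Hence~\eqref{b100} yields
\begin{equation*}
	\frac{1}{2}\frac{\mathrm{d}}{\mathrm{d}t}\mathcal{L}_0(u(t),v(t)) + \mathcal{D}_0(u(t),v(t)) \le 0, \qquad t\ge 0.
\end{equation*}

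From here, first I would drop the non-negative dissipation term to get $\frac{\mathrm{d}}{\mathrm{d}t}\mathcal{L}_0(u(t),v(t))\le 0$, so that $t\mapsto \mathcal{L}_0(u(t),v(t))$ is non-increasing; combined with $\mathcal{L}_0\ge 0$ this gives~\eqref{b08a}. Then I would instead integrate the displayed differential inequality over $(0,t)$, which produces
\begin{equation*}
	\int_0^t \mathcal{D}_0(u(\tau),v(\tau))\,\mathrm{d}\tau \le \frac12\big( \mathcal{L}_0(u^{in},v^{in}) - \mathcal{L}_0(u(t),v(t))\big) \le \frac12 \mathcal{L}_0(u^{in},v^{in}),
\end{equation*}
and letting $t\to\infty$ and using the non-negativity of $\mathcal{D}_0$ yields~\eqref{b08b}.

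The only genuine subtlety is rigor: since $(u,v)$ is merely a weak solution with the limited regularity of Definition~\ref{defin}, the formal manipulations in the proof of Lemma~\ref{lemb02} (integration by parts, chain rule for $\Psi(v)$, differentiating $\|\nabla R\|_2^2$, etc.) need to be justified, typically via a regularization or approximation argument, or by testing against suitable admissible test functions. I would treat identity~\eqref{b100} as already established at the level of weak solutions (as stated in Lemma~\ref{lemb02}) and, correspondingly, interpret the time derivative in an integrated-in-time sense, so that the passage from~\eqref{b100} to~\eqref{b08a}–\eqref{b08b} is just the elementary argument above. Thus there is essentially no obstacle once Lemma~\ref{lemb02} is granted; the proof is a short sign-chasing plus integration.
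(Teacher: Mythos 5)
Your proposal is correct and follows essentially the same route as the paper: observe that $M-\langle v(t)\rangle\le 0$ by~\eqref{b01b} and~\eqref{b04} so the right-hand side of~\eqref{b100} is non-positive, note that the monotonicity of $\phi\gamma$ gives $d_2\ge 0$ and hence $\mathcal{D}_0\ge 0$, and then integrate the resulting differential inequality. The paper likewise takes~\eqref{b100} as granted from Lemma~\ref{lemb02}, so your sign-chasing plus integration is exactly the intended argument.
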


\begin{proof}
	We first notice that~\eqref{b01b} and~\eqref{b04} imply that $M-\langle v(t) \rangle \le 0$ for all $t\ge 0$, so that, owing to the non-negativity~\eqref{hypphia} and~\eqref{hypgamma} of $\phi$ and $\gamma$,
	\begin{equation}
		(M - \langle v(t)\rangle) \int_\Omega \phi(u(t,x)) \gamma(v(t,x))\ \mathrm{d}x \le 0, \qquad t\ge 0. \label{b05}
	\end{equation}
Also, the monotonicity of $\phi\gamma$ ensures the non-negativity of $d_2$. Consequently,
\begin{equation*}
	\mathcal{D}_0(u(t),v(t))\ge 0, \qquad t\ge 0.  
\end{equation*}
Gathering~\eqref{b100} and~\eqref{b05} leads us to
\begin{equation*}
	\frac{1}{2} \frac{\mathrm{d}}{\mathrm{d}t} \mathcal{L}_0(u,v) + \mathcal{D}_0(u,v) \le 0, \qquad t\ge 0.
\end{equation*}
Integrating the above differential inequality and using the non-negativity of $\mathcal{L}_0(u,v)$ and $\mathcal{D}_0(u,v)$ immediately give~\eqref{b08a} and~\eqref{b08b}.
\end{proof}

We next turn to the case $\langle v^{in}\rangle < M$ and first observe that the right-hand side of~\eqref{b100} is now non-negative due to the positivity~\eqref{hypphia} and~\eqref{hypgamma} of $\phi$ and $\gamma$ and the positivity of $M-\langle v\rangle $ stemming from~\eqref{b01b}. We actually need to modify $\mathcal{L}_0$ to deal with this additional term.

\begin{proposition}\label{propb04}
	Assume that $\phi\gamma$ is non-decreasing and that 
	\begin{equation}
		\langle v^{in}\rangle < M. \label{b09}
	\end{equation}
Introducing
\begin{align*}
	\ell_3(u,v) & :=  (M-\langle v\rangle) \|\nabla P\|_2^2 \ge 0, \\
	d_3(u,v) & := (M-\langle v\rangle) \int_\Omega u \phi(u) \gamma(v)\ \mathrm{d}x \ge 0, \\
	\mathcal{L}_1(u,v) & := \mathcal{L}_0(u,v) + \ell_3(u,v)\ge 0, \\
	\mathcal{D}_1(u,v) & := \mathcal{D}_0(u,v) + \frac{\ell_3(u,v)}{2} + d_3(u,v) \ge 0,
\end{align*}
there are $C_1>0$ and $C_2>0$ depending only on $\Omega$, $\phi$, $\kappa$, and $M$  such that
	\begin{align}
		0 & \le \mathcal{L}_1(u(t),v(t) \le C_1 \big[ 1 + \mathcal{L}_1(u^{in},v^{in}) \big], \qquad t\ge 0, \label{b10a} \\
		0 & \le \int_0^\infty \mathcal{D}_1(u(t),v(t))\ \mathrm{d}t \le C_2 \big[ 1 +  \mathcal{L}_1(u^{in},v^{in}) \big]. \label{b10b}
	\end{align}
\end{proposition}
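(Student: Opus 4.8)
The idea is to upgrade the identity~\eqref{b100} into a differential inequality for $\mathcal{L}_1$ that can be closed by a Gronwall argument, the crucial point being that, by~\eqref{b01b} and~\eqref{b09}, the coefficient $M-\langle v(t)\rangle = (M-\langle v^{in}\rangle)\,e^{-t}$ is \emph{positive} and \emph{integrable} on $(0,\infty)$, with $\int_0^\infty (M-\langle v(t)\rangle)\,\mathrm{d}t = M-\langle v^{in}\rangle<\infty$. As a first step, I would differentiate $\ell_3 = (M-\langle v\rangle)\|\nabla P\|_2^2$, using $\tfrac{\mathrm d}{\mathrm dt}\langle v\rangle = M-\langle v\rangle$ from~\eqref{b102} and the expression~\eqref{b101} for $\tfrac{\mathrm d}{\mathrm dt}\|\nabla P\|_2^2$; splitting $\int_\Omega(M-u)\phi(u)\gamma(v)\,\mathrm dx = M\int_\Omega\phi(u)\gamma(v)\,\mathrm dx - \int_\Omega u\phi(u)\gamma(v)\,\mathrm dx$ leads to $\tfrac12\tfrac{\mathrm d}{\mathrm dt}\ell_3 = -\tfrac12\ell_3 - d_3 + M(M-\langle v\rangle)\int_\Omega\phi(u)\gamma(v)\,\mathrm dx$. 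Adding this to~\eqref{b100} and recalling the definitions of $\mathcal{L}_1$ and $\mathcal{D}_1$ yields
\begin{equation*}
	\tfrac12\,\tfrac{\mathrm d}{\mathrm dt}\mathcal{L}_1(u,v) + \mathcal{D}_1(u,v) = (1+M)\,(M-\langle v\rangle)\int_\Omega\phi(u)\gamma(v)\,\mathrm dx .
\end{equation*}
The non-negativity of $\mathcal{L}_1$ and of every summand of $\mathcal{D}_1$ is already contained in Lemma~\ref{lemb02}, except for $d_2\ge0$, which uses that $\phi\gamma$ is non-decreasing, and for $\ell_3,d_3\ge0$, which use $M-\langle v\rangle>0$.

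The technical core is then to estimate the right-hand side above. I would split $\Omega$ according to the size of $u$: on $\{u\ge 2(1+M)\}$ one has $(1+M)\phi(u)\gamma(v)\le\tfrac12\,u\,\phi(u)\gamma(v)$, so this contribution is absorbed into $\tfrac12 d_3$; on $\{u<2(1+M)\}$ one has $\phi(u)\le\phi(2(1+M))$, and it remains to control $\int_\Omega\gamma(v)\,\mathrm dx$. For the latter, writing $\gamma(v)=(\phi\gamma)(v)/\phi(v)\le(\phi\gamma)(v)/\phi(1)$ on $\{v\ge1\}$ and using the continuity of $\gamma$ on $[0,1]$ on $\{v<1\}$, the growth condition~\eqref{hypcomp} together with~\eqref{Psi} gives $\int_\Omega\gamma(v)\,\mathrm dx\le C\,(1+\ell_2)\le C\,(1+\mathcal{L}_1)$. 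Altogether, for some constant $C_3>0$,
\begin{equation*}
	\tfrac12\,\tfrac{\mathrm d}{\mathrm dt}\mathcal{L}_1 + \mathcal{D}_0 + \tfrac12\ell_3 + \tfrac12 d_3 \le C_3\,(M-\langle v\rangle)\,\bigl(1+\mathcal{L}_1\bigr).
\end{equation*}

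Finally, dropping the non-negative dissipative terms and applying Gronwall's lemma to $t\mapsto 1+\mathcal{L}_1(u(t),v(t))$, the integrability of $M-\langle v\rangle$ yields~\eqref{b10a} with, e.g., $C_1=e^{2C_3 M}$. For~\eqref{b10b}, I would integrate the last inequality over $(0,\infty)$, bound its right-hand side by $C(1+\mathcal{L}_1(u^{in},v^{in}))$ using~\eqref{b10a}, and discard $\mathcal{L}_1(u(t),v(t))\ge0$ as $t\to\infty$; this controls $\int_0^\infty(\mathcal{D}_0+\tfrac12\ell_3+\tfrac12 d_3)\,\mathrm dt$. Since $\mathcal{D}_1$ contains the \emph{full} $d_3$, one last estimate for $\int_0^\infty d_3\,\mathrm dt$ is needed: splitting again at $u=2(1+M)$, the contribution of $\{u<2(1+M)\}$ is $\lesssim\int_0^\infty(M-\langle v\rangle)(1+\mathcal{L}_1)\,\mathrm dt<\infty$, while on $\{u\ge 2(1+M)\}$ one uses $u\le 2(u-M)$ and~\eqref{b101} to rewrite the contribution as $-(M-\langle v\rangle)\,\tfrac{\mathrm d}{\mathrm dt}\|\nabla P\|_2^2$ up to a $C(M-\langle v\rangle)(1+\mathcal{L}_1)$ remainder; integrating by parts in time and noting that $\tfrac{\mathrm d}{\mathrm dt}(M-\langle v\rangle)=-(M-\langle v\rangle)$ while $\ell_3=(M-\langle v\rangle)\|\nabla P\|_2^2\ge0$, this contribution is bounded by $\ell_3(u^{in},v^{in})\le\mathcal{L}_1(u^{in},v^{in})$ plus more of the same. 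Combining gives $\int_0^\infty d_3\,\mathrm dt\le C(1+\mathcal{L}_1(u^{in},v^{in}))$, hence~\eqref{b10b}.

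The main obstacle is the second step, namely absorbing the ``bad'' source term $(1+M)(M-\langle v\rangle)\int_\Omega\phi(u)\gamma(v)\,\mathrm dx$ into the dissipation plus a Gronwall-type remainder; this is where the decomposition of $\Omega$ by the size of $u$ and the growth hypothesis~\eqref{hypcomp} are essential. Once this is achieved, the remaining arguments are a Gronwall estimate and bookkeeping, made possible solely by the time-integrability of $M-\langle v\rangle$ supplied by~\eqref{b01b}.
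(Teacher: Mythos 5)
Your proposal is correct and follows essentially the same route as the paper: the same identity $\tfrac12\tfrac{\mathrm d}{\mathrm dt}\mathcal{L}_1+\mathcal{D}_1=(M+1)(M-\langle v\rangle)\int_\Omega\phi(u)\gamma(v)\,\mathrm dx$, the same splitting of $\Omega$ at $u=2(M+1)$ to absorb the large-$u$ part into the dissipation, the same use of~\eqref{hypcomp} to bound $\|\gamma(v)\|_1$ by $C(1+\mathcal{L}_1)$, and the same Gronwall argument exploiting the integrability of $M-\langle v\rangle$. Your final integration-by-parts argument for $\int_0^\infty d_3\,\mathrm dt$ is correct but superfluous: the quantity $\mathcal{D}_0+\tfrac12\ell_3+\tfrac12 d_3$ you retain already dominates $\tfrac12\mathcal{D}_1$ (all summands being non-negative), so~\eqref{b10b} follows directly from integrating your differential inequality, exactly as in the paper where the large-$u$ contribution is bounded by $\tfrac12\mathcal{D}_1$ from the outset.
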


\begin{proof} 
	Owing to~\eqref{b01b} and~\eqref{b09},
	\begin{equation}
		M - \langle v(t) \rangle > 0, \qquad t\ge 0, \label{b11}
	\end{equation}
which guarantees the non-negativity of $\ell_3$, as well as that of $d_3$, once combined with the non-negativity of $u$, $\phi$, and $\gamma$. Next, the non-negativity of $\mathcal{L}_1$ and $\mathcal{D}_1$ follows from the already established non-negativity of $\mathcal{L}_0$, $\ell_3$, $\mathcal{D}_0$, and $d_3$.

We then infer from~\eqref{b101} and~\eqref{b102} that 
\begin{align}
	\frac{1}{2} \frac{\mathrm{d}}{\mathrm{d}t}\ell_3(u,v) & = - \frac{1}{2} (M-\langle v\rangle) \|\nabla P\|_2^2 + (M-\langle v\rangle) \int_\Omega (M-u) \phi(u) \gamma(v)\ \mathrm{d}x \nonumber\\
	& = - \frac{\ell_3(u,v)}{2} - (M-\langle v\rangle) \int_\Omega u \phi(u) \gamma(v)\ \mathrm{d}x \label{b12}\\
	& \qquad + M (M-\langle v\rangle) \int_\Omega \phi(u) \gamma(v)\ \mathrm{d}x. \nonumber
\end{align}
Summing~\eqref{b100} and~\eqref{b12}, we obtain
\begin{equation}
	\frac{1}{2} \frac{\mathrm{d}}{\mathrm{d}t} \mathcal{L}_1(u,v) + \mathcal{D}_1(u,v) = (M+1) (M-\langle v\rangle) \int_\Omega \phi(u) \gamma(v)\ \mathrm{d}x. \label{b13}
\end{equation}
Since
\begin{align*}
	\int_\Omega \phi(u) \gamma(v)\ \mathrm{d}x & = \int_\Omega \mathbf{1}_{[0,2(M+1)]}(u) \phi(u) \gamma(v)\ \mathrm{d}x + \int_\Omega \mathbf{1}_{(2(M+1),\infty)}(u) \phi(u) \gamma(v)\ \mathrm{d}x \\
	& \le \phi(2(M+1)) \int_\Omega \mathbf{1}_{[0,2(M+1)]}(u) \gamma(v)\ \mathrm{d}x \\
	& \qquad + \frac{1}{2(M+1)} \int_\Omega \mathbf{1}_{(2(M+1),\infty)}(u) u \phi(u) \gamma(v)\ \mathrm{d}x \\
	& \le \phi(2(M+1)) \int_\Omega \gamma(v)\ \mathrm{d}x + \frac{1}{2(M+1)} \int_\Omega u \phi(u) \gamma(v)\ \mathrm{d}x,
\end{align*}
we deduce from~\eqref{b11} and~\eqref{b13} that
\begin{align*}
	\frac{1}{2} \frac{\mathrm{d}}{\mathrm{d}t} \mathcal{L}_1(u,v) + \mathcal{D}_1(u,v) & \le (M+1)\phi(2(M+1))(M-\langle v\rangle) \int_\Omega \gamma(v)\ \mathrm{d}x \\
	& \qquad + \frac{M-\langle v\rangle}{2} \int_\Omega u \phi(u) \gamma(v)\ \mathrm{d}x \\
	& \le  (M+1)\phi(2(M+1))(M-\langle v\rangle) \|\gamma(v)\|_1 \\
	& \qquad + \frac{\mathcal{D}_1(u,v)}{2}.
\end{align*}
Hence
\begin{equation}
	\frac{\mathrm{d}}{\mathrm{d}t} \mathcal{L}_1(u,v) + \mathcal{D}_1(u,v) \le 2(M+1) \phi(2(M+1))(M-\langle v\rangle) \|\gamma(v)\|_1. \label{b13.5}
\end{equation}
It remains to estimate the $L^1$-norm of $\gamma(v)$. To this end, we notice that~\eqref{hypphia}, \eqref{hypgamma}, and~\eqref{hypcomp} imply that
\begin{align*}
	\gamma(s) \le \sup_{[0,1]}\{\gamma\}, & \qquad s\in [0,1], \\
	\gamma(s) \le \frac{(\phi\gamma)(s)}{\phi(1)} \le \frac{\kappa}{\phi(1)} (1 + \Psi(s)), & \qquad s>1.
\end{align*}
Consequently,
\begin{equation*}
	\|\gamma(v)\|_1 \le \left( \sup_{[0,1]}\{\gamma\} + \frac{\kappa}{\phi(1)} \right) \left( |\Omega| + \|\Psi(v)\|_1 \right), 
\end{equation*}
so that, setting 
\begin{equation*}
	C_0 := (M+1) \phi(2(M+1)) \left( \sup_{[0,1]}\{\gamma\} + \frac{\kappa}{\phi(1)} \right),
\end{equation*}
we infer from~\eqref{b11}, \eqref{b13.5}, and the above inequality that
\begin{align}
	\frac{\mathrm{d}}{\mathrm{d}t} \mathcal{L}_1(u,v) + \mathcal{D}_1(u,v) & \le 2C_0 (M-\langle v\rangle) \left( |\Omega| + \|\Psi(v)\|_1 \right) \nonumber \\
	& \le C_0 (M-\langle v\rangle) \big[ 2|\Omega| + \ell_2(u,v) \big] \nonumber \\
	& \le C_0 (M-\langle v\rangle) \big[ 2|\Omega| + \mathcal{L}_1(u,v) \big] . \label{b14}
\end{align} 
Owing to the non-negativity of $\mathcal{D}_1$ and~\eqref{b01b}, the function $\mathcal{L}_1(u,v)$ satisfies the differential inequality
\begin{equation*}
	\frac{\mathrm{d}}{\mathrm{d}t} \mathcal{L}_1(u(t),v(t))  \le C_0 (M-\langle v^{in}\rangle) e^{-t} \big[ 2|\Omega| + \mathcal{L}_1(u(t),v(t)) \big], \qquad t\ge 0,
\end{equation*}
which gives, after integration,
\begin{equation*}
	2|\Omega| + \mathcal{L}_1(u(t),v(t)) \le \left( 2|\Omega| + \mathcal{L}_1(u^{in},v^{in}) \right) \exp\Big\{ C_0 (M-\langle v^{in}\rangle) \big(1 - e^{-t} \big) \Big\}, \qquad t\ge 0,
\end{equation*}
and from which we deduce~\eqref{b10a} with $C_1 := \big( 1+2|\Omega| \big) e^{C_0 M}$. We next integrate~\eqref{b14} with respect to time and infer from~\eqref{b01b} and~\eqref{b10a} that, for $t>0$, 
\begin{align*}
	\int_0^t \mathcal{D}_1(u(\tau),v(\tau))\ \mathrm{d}\tau & \le \mathcal{L}_1(u(t),v(t)) + \int_0^t \mathcal{D}_1(u(\tau),v(\tau))\ \mathrm{d}\tau \\
	& \le \mathcal{L}_1(u^{in},v^{in}) + C_0 \int_0^t (M-\langle v(\tau)\rangle \big[ 2|\Omega| + \mathcal{L}_1(u(\tau),v(\tau)) \big]\ \mathrm{d}\tau \\
	& \le \mathcal{L}_1(u^{in},v^{in}) + C_0 (M-\langle v^{in}\rangle) \big[ 2|\Omega| + C_1 + C_1 \mathcal{L}_1(u^{in},v^{in}) \big] \int_0^t e^{-\tau}\ \mathrm{d}\tau \\
	& \le C_2 \big[ 1 + \mathcal{L}_1(u^{in},v^{in}) \big],
\end{align*} 
with $C_2 := 1+ C_0 M \big( 2 |\Omega| + C_1 \big)$. Letting $t\to\infty$ in the above estimate gives~\eqref{b10b}.
\end{proof}

Summarizing the outcome of Propositions~\ref{propb03} and~\ref{propb04}, we have derived the following bounds.

\begin{proposition}\label{propb05}
	Assume that $\phi\gamma$ is non-decreasing on $(0,\infty)$. Then there is $C_3>0$ such that
	\begin{subequations}\label{b15}
	\begin{align}
		\sup_{t\in [0,\infty)} \big\{ \|P(t)\|_{H^1} + \|v(t)\|_2 + \|\Psi(v(t))\|_1 \big\} & \le C_3, \label{b15a}\\
		\int_0^\infty \int_\Omega \left[ \gamma(v) (\phi(v)-\phi(u)) (v-u) + (\phi\gamma)'(v) |\nabla v|^2 \right] ~\mathrm{d}x ~\mathrm{d}t & \le C_3, \label{b15b}\\ 
		\int_0^\infty \Big[ \|\partial_t Q(t)\|_{H^1}^2 + \|\partial_t v(t)\|_{(H^1)'}^2 \Big]\ \mathrm{d}t & \le C_3. \label{b15c}
	\end{align}
	\end{subequations}
\end{proposition}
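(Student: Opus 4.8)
The plan is to combine the two dichotomous cases treated in Propositions~\ref{propb03} and~\ref{propb04} into uniform-in-time bounds, using the explicit behaviour of $\langle v\rangle$ from Lemma~\ref{lemb01} to absorb the sign-dependent discrepancy. First I would note that under the standing assumption that $\phi\gamma$ is non-decreasing, exactly one of~\eqref{b04} or~\eqref{b09} holds for the given initial data, so one of Proposition~\ref{propb03} or Proposition~\ref{propb04} applies. In the case $\langle v^{in}\rangle\ge M$, estimates~\eqref{b08a}--\eqref{b08b} already give bounds of the desired form with constants depending only on $\mathcal{L}_0(u^{in},v^{in})\le\mathcal{L}_1(u^{in},v^{in})$; in the case $\langle v^{in}\rangle<M$, estimates~\eqref{b10a}--\eqref{b10b} do likewise. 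So it suffices to take $C_3$ to be the maximum of the relevant constants and unwind the definitions of $\mathcal{L}_0$, $\mathcal{L}_1$, $\mathcal{D}_0$, $\mathcal{D}_1$, $\ell_j$, $d_j$ to see that each quantity on the left-hand side of~\eqref{b15} is controlled.

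Next I would extract~\eqref{b15a}. The bound $\|\Psi(v(t))\|_1\le\tfrac12\ell_2(u(t),v(t))\le\tfrac12\mathcal{L}_1(u(t),v(t))$ is immediate. For $\|v(t)\|_2$, combine $\ell_1(u(t),v(t))=\|v(t)\|_2^2-|\Omega|\langle v(t)\rangle^2\le\mathcal{L}_1(u(t),v(t))$ with the uniform bound $|\langle v(t)\rangle|\le\max\{|\langle v^{in}\rangle|,M\}$ coming from~\eqref{b01b}, which gives $\|v(t)\|_2^2\le\mathcal{L}_1(u(t),v(t))+|\Omega|\max\{|\langle v^{in}\rangle|,M\}^2$. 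For the $H^1$-norm of $P=\mathcal{K}[u-M]$: since $\langle P(t)\rangle=0$ by~\eqref{opKb}, the Poincaré--Wirtinger inequality reduces $\|P(t)\|_{H^1}$ to a multiple of $\|\nabla P(t)\|_2$, so I need $\|\nabla P(t)\|_2$ bounded. In the case $\langle v^{in}\rangle<M$ this follows from $\|\nabla P(t)\|_2^2=\ell_3(u(t),v(t))/(M-\langle v(t)\rangle)$ together with~\eqref{b10a} and the lower bound $M-\langle v(t)\rangle\ge(M-\langle v^{in}\rangle)e^{-t}$... which degenerates as $t\to\infty$, so this route is insufficient on its own; instead I would use $R=P-Q$ with $\|\nabla Q(t)\|_2$ controlled (via $\|Q(t)\|_{H^1}\le C\|v(t)\|_2$, already bounded) and $\|\nabla R(t)\|_2^2=\ell_0(u(t),v(t))\le\mathcal{L}_1(u(t),v(t))$, whence $\|\nabla P(t)\|_2\le\|\nabla R(t)\|_2+\|\nabla Q(t)\|_2$ is uniformly bounded in both cases. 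That is the one place where a little care is needed.

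For~\eqref{b15b}, the integrand is exactly $d_0(u,v)+d_2(u,v)\le\mathcal{D}_0(u,v)\le\mathcal{D}_1(u,v)$, so integrating in time and applying~\eqref{b08b} or~\eqref{b10b} yields the bound. For~\eqref{b15c}, the term $\|\partial_t Q(t)\|_{H^1}^2$ is, up to the Poincaré--Wirtinger constant (again $\langle\partial_t Q(t)\rangle=\partial_t\langle Q(t)\rangle=0$), comparable to $\|\nabla\partial_t Q(t)\|_2^2=d_1(u(t),v(t))\le\mathcal{D}_0\le\mathcal{D}_1$; and from~\eqref{b02} we have $\partial_t v=\langle v\rangle-\partial_t Q+R$, wait — more usefully, from~\eqref{ks2}, $\partial_t v=\Delta v-v+u$, but the clean identity is that $\partial_t(v-\langle v\rangle)=-\Delta\partial_t Q$, so $\|\partial_t v(t)\|_{(H^1)'}\le\|\partial_t\langle v(t)\rangle\|_{(H^1)'}+\|\Delta\partial_t Q(t)\|_{(H^1)'}\le C(|\Omega|^{1/2}|\partial_t\langle v(t)\rangle|+\|\nabla\partial_t Q(t)\|_2)$, and $|\partial_t\langle v(t)\rangle|=|M-\langle v(t)\rangle|e^{?}$... precisely $\partial_t\langle v(t)\rangle=(M-\langle v^{in}\rangle)(-1)e^{-t}\cdot(-1)$, i.e.\ it decays exponentially and is integrable in $t$, hence square-integrable, so $\int_0^\infty\|\partial_t v(t)\|_{(H^1)'}^2\,\mathrm{d}t$ is controlled by $\int_0^\infty d_1(u(t),v(t))\,\mathrm{d}t$ plus an explicit constant. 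Collecting all contributions and taking $C_3$ large enough finishes the proof. The main obstacle, as indicated, is the uniform control of $\|\nabla P(t)\|_2$ — the naive bound from $\ell_3$ degenerates in time, and the fix is to pass through $R$ and $Q$; everything else is bookkeeping over the two cases.
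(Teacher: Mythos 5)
Your proposal is correct and follows essentially the same route as the paper's proof: the same case split via Propositions~\ref{propb03} and~\ref{propb04}, the same extraction of each bound from the terms $\ell_j$ and $d_j$, the same passage through $\|\nabla R\|_2$ and $\|\nabla Q\|_2\le C_{PW}\|v\|_2$ to control $\|\nabla P\|_2$ (correctly avoiding the degenerate route via $\ell_3$), and the same identity $\partial_t v=-\Delta\partial_t Q+\partial_t\langle v\rangle$ with the exponentially decaying remainder for~\eqref{b15c}.
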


\begin{proof}
Let $t\ge 0$. On the one hand, by~\eqref{b01b}, \eqref{b08a}, and~\eqref{b10a},
\begin{align}
	\|v(t)\|_2^2 & \le \ell_1(u(t),v(t)) + |\Omega| \langle v(t)\rangle^2 \nonumber\\
	& \le \max\big\{ \mathcal{L}_0(u(t),v(t)) , \mathcal{L}_1(u(t),v(t)) \big\} + |\Omega| \max\big\{ M^2 , \langle v^{in}\rangle^2 \big\} \nonumber \\
	& \le \max\big\{ \mathcal{L}_0(u^{in},v^{in}) , C_1 [1+\mathcal{L}_1(u^{in},v^{in})] \big\} + |\Omega| \max\big\{ M^2 , \langle v^{in}\rangle^2 \big\}. \label{b16}
\end{align}
On the other hand, we infer from~\eqref{opK} and the Poincar\'e-Wirtinger inequality
\begin{equation}
	\|z -\langle z\rangle\|_2 \le C_{PW} \|\nabla z\|_2, \qquad z\in H^1(\Omega), \label{pwi}
\end{equation}
that
\begin{align*}
	\|\nabla Q(t)\|_2^2 & = \int_\Omega (Q v)(t,x)\ \mathrm{d}x \le \|Q(t)\|_2 \|v(t)\|_2 \\
	& \le C_{PW} \|\nabla Q(t)\|_2 \|v(t)\|_2 \le \frac{\|\nabla Q(t)\|_2^2}{2} + \frac{C_{PW}^2}{2} \|v(t)\|_2^2.
\end{align*}
Consequently, $\|\nabla Q(t)\|_2 \le C_{PW} \|v(t)\|_2$, from which we deduce that
\begin{equation}
	\begin{split}
	\|P(t)\|_2 & \le C_{PW} \|\nabla P(t)\|_2 \le C_{PW} \big[ \|\nabla R(t)\|_2 + \|\nabla Q(t)\|_2 \big] \\
	& \le C_{PW} \|\nabla R(t)\|_2 + C_{PW}^2 \|v(t)\|_2. 
	\end{split} \label{b17}
\end{equation}
Thanks to~\eqref{b16} and~ \eqref{b17}, the estimates~\eqref{b15a} and~\eqref{b15b} readily follows from Propositions~\ref{propb03} and~\ref{propb04}.

Next, by~\eqref{pwi},
\begin{equation}
		 \|\partial_t Q(t)\|_2^2 \le C_{PW}^2 \|\nabla\partial_t Q(t)\|_2^2. \label{b17.5}
\end{equation}
We also infer from~\eqref{b01b}, \eqref{auxf2}, and H\"older's inequality that, for $t\ge 0$ and $\vartheta\in H^1(\Omega)$, 
\begin{align*}
	\Big| \langle \partial_t v(t) , \vartheta\rangle \Big| & = \left| \left\langle - \Delta\partial_t Q(t) + \frac{\mathrm{d}}{\mathrm{d}t}\langle v(t)\rangle , \vartheta \right\rangle \right| \le \left| \int_\Omega \nabla Q(t)\cdot \nabla\vartheta\ \mathrm{d}x \right| + |\langle v^{in} \rangle -M| \|\vartheta\|_1 e^{-t} \\
	& \le \|\nabla \partial_t Q(t)\|_2 \|\nabla\vartheta\|_2 + |\langle v^{in} \rangle -M| \sqrt{|\Omega|} \|\vartheta\|_2 e^{-t} \\
	& \le \left( \|\nabla \partial_t Q(t)\|_2 + |\langle v^{in} \rangle -M| \sqrt{|\Omega|} e^{-t} \right) \|\vartheta\|_{H^1} .
\end{align*}
Therefore,
\begin{equation*}
	\|\partial_t v(t) \|_{(H^1)'} \le \|\nabla \partial_t Q(t)\|_2 + |\langle v^{in} \rangle -M| \sqrt{|\Omega|} e^{-t},
\end{equation*}
from which we deduce that
\begin{equation}
	\|\partial_t v(t) \|_{(H^1)'}^2 \le 2 \|\nabla \partial_t Q(t)\|_2^2 + 2 |\langle v^{in} \rangle -M|^2 |\Omega| e^{-2t}. \label{b18.5}
\end{equation}
Collecting~\eqref{b17.5} and~\eqref{b18.5} and using~\eqref{b08b} and~\eqref{b10b} give~\eqref{b15c}.
\end{proof}

\section{Large time behaviour}\label{sec.ltb}

We now begin the study of the large time behaviour and first collect some information about the compactness of $(P,Q,v)$ and their cluster points in suitable topologies as $t\to\infty$.

\begin{lemma}\label{lemb06}
Assume that $\phi\gamma$ is non-decreasing on $(0,\infty)$. Then $\mathcal{V} = \{v(t)\ :\ t\ge 0\}$ is relatively compact in $L^q(\Omega)$ for $q\in [1,2)$. 

Consider now a cluster point $v_\infty\in L^1(\Omega)$ of $\mathcal{V}$ in $L^1(\Omega)$ as $t\to\infty$; that is, there is a sequence $(t_j)_{j\ge 1}$ in $(1,\infty)$ such that
\begin{equation}
	\lim_{j\to\infty} t_j = \infty \;\;\text{ and }\;\; \lim_{j\to\infty} \|v(t_j) - v_\infty\|_1 = 0. \label{b20} 
\end{equation}
Then 
\begin{equation}
	v_\infty \in L^2(\Omega) \;\;\text{ with }\;\; \langle v_\infty \rangle = M \;\;\text{ and }\;\; \lim_{j\to\infty} \|v(t_j) - v_\infty\|_q = 0 \label{b20.5}
\end{equation} 
for all $q\in[1,2)$. Moreover, introducing $Q_\infty := \mathcal{K}[v_\infty-M]$ and
\begin{equation*}
	(u_j,v_j,P_j,Q_j)(\tau,x) := (u,v,P,Q)(t_j+\tau,x), \qquad (\tau,x)\in [0,1]\times\Omega, \ j\ge 1,
\end{equation*}
there holds, after possibly extracting a subsequence,
\begin{align}
	& \lim_{j\to\infty} \sup_{\tau\in [0,1]} \|v_j(\tau)-v_\infty\|_q = \lim_{j\to\infty} \sup_{\tau\in [0,1]} \|Q_j(\tau)-Q_\infty\|_2 = 0, \label{b18} \\
	& \lim_{j\to\infty} \int_0^1 \|\nabla Q_j(\tau) - \nabla Q_\infty\|_2^2\ \mathrm{d}\tau = \lim_{j\to\infty} \int_0^1 \|P_j(\tau) - Q_\infty - v_\infty + M \|_q^2\ \mathrm{d}\tau = 0, \label{b18.2}
 \end{align}
 for any $q\in [1,2)$.
\end{lemma}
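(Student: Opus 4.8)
The plan is to first establish relative compactness of $\mathcal{V}$ in $L^q(\Omega)$ for $q\in[1,2)$, then extract the claimed convergences for a fixed cluster point $v_\infty$. For the relative compactness, I would combine the uniform bounds coming from Proposition~\ref{propb05}. The $L^\infty(0,\infty;L^2(\Omega))$-bound on $v$ in~\eqref{b15a} already controls the $L^q$-norms for $q<2$; to upgrade boundedness to compactness I would use the dissipation estimate~\eqref{b15b}. Rewriting $v$ via~\eqref{ks2} as $v = \mathcal{K}[\partial_t v] + \mathcal{K}[u-v] + \langle v\rangle$ is not quite what I want, since $u$ is only controlled in $(H^1)'$; instead I would exploit $d_2$. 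When $(\phi\gamma)'>0$ a.e., $\int_0^\infty\int_\Omega(\phi\gamma)'(v)|\nabla v|^2<\infty$ gives time-integrated $H^1_{\rm loc}$-control of a nonlinear function of $v$ (namely of $\Xi(v)$ where $\Xi'=\sqrt{(\phi\gamma)'}$), and together with the $(H^1)'$-equicontinuity in time from~\eqref{b15c} (via $\partial_t v\in L^2(0,\infty;(H^1)')$) an Aubin--Lions-type argument yields strong $L^q$-compactness of the trajectory. Since $(\phi\gamma)'$ may only be $\ge 0$, I would instead argue more directly: the bound on $d_0$ in~\eqref{b15b}, $\int_0^\infty\int_\Omega\gamma(v)(\phi(v)-\phi(u))(v-u)\,\mathrm dx\,\mathrm dt<\infty$, forces $\phi(v)-\phi(u)\to 0$ in an averaged sense, which combined with the $(H^1)'$-compactness of $u$-averages and the parabolic smoothing of~\eqref{ks2} pins down $v$ in $L^q$. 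The cleanest route is: $v$ bounded in $L^\infty(0,\infty;L^2)$ and $\partial_t v$ bounded in $L^2(0,\infty;(H^1)')$, so on each window $[t,t+1]$ the family is equicontinuous into $(H^1)'$; interpolating the $L^2$-bound with the $(H^1)'$-equicontinuity and using compactness of $L^2(\Omega)\hookrightarrow(H^1)'$ gives, after the standard argument, relative compactness of $\{v(t)\}$ in $C([0,1];(H^1)')$ after shifting, hence pointwise-in-time $(H^1)'$-limits; promoting this to $L^q$, $q<2$, uses uniform integrability from the $L^2$-bound (equi-integrability) plus the $L^1$-convergence, via Vitali. The relevant compactness statement that does the job is a uniform-in-$t$ version of Aubin--Lions on the shifted windows.

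Given this, the second part is largely bookkeeping. For a cluster point $v_\infty$ as in~\eqref{b20}: passing to the limit along $(t_j)$ in $\langle v(t_j)\rangle = M + (\langle v^{in}\rangle - M)e^{-t_j}$ from~\eqref{b01b} gives $\langle v_\infty\rangle = M$. Fatou applied to $\|v(t_j)\|_2\le C_3$ gives $v_\infty\in L^2(\Omega)$ with $\|v_\infty\|_2\le C_3$; interpolating $\|v(t_j)-v_\infty\|_q\le\|v(t_j)-v_\infty\|_1^{1-\sigma}\|v(t_j)-v_\infty\|_2^{\sigma}$ (uniformly bounded second factor) yields $L^q$-convergence for $q<2$, i.e.~\eqref{b20.5}. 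For the time-window statements, set $u_j(\tau)=u(t_j+\tau)$ etc.; the shifted dissipation integrals $\int_0^1[d_0+d_1+d_2](u_j,v_j)\,\mathrm d\tau\to 0$ as $j\to\infty$ because $\int_{t_j}^{t_j+1}\mathcal D_0\to 0$ by~\eqref{b15b}, \eqref{b15c} (tail of a convergent integral). In particular $\int_0^1\|\nabla\partial_t Q_j\|_2^2\,\mathrm d\tau\to 0$, so $Q_j(\tau)-Q_j(0)\to 0$ in $H^1$ uniformly in $\tau\in[0,1]$ (time-integrating $\partial_t Q_j$ and using Cauchy--Schwarz); since $Q_j(0)=\mathcal K[v(t_j)-\langle v(t_j)\rangle]\to\mathcal K[v_\infty-M]=Q_\infty$ in $H^1$ by continuity of $\mathcal K$ and~\eqref{b20.5}, we get $\sup_\tau\|Q_j(\tau)-Q_\infty\|_2\to 0$ and $\int_0^1\|\nabla Q_j-\nabla Q_\infty\|_2^2\to 0$. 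The corresponding statement $\sup_\tau\|v_j(\tau)-v_\infty\|_q\to 0$ follows by the same equicontinuity-plus-$L^q$-interpolation argument as in the first paragraph, now anchored at $\tau=0$ where $v_j(0)=v(t_j)\to v_\infty$, using that $\partial_t v$ is $L^2(0,\infty;(H^1)')$ so $\sup_\tau\|v_j(\tau)-v_j(0)\|_{(H^1)'}\to 0$, then upgrading to $L^q$ by equi-integrability; this is where a subsequence extraction may be needed. Finally, from~\eqref{b02}, $P_j = R_j + Q_j = (v_j-\langle v_j\rangle) - \partial_t Q_j + \langle v_j\rangle \cdot 0$... more precisely $R_j = P_j-Q_j$ and $\partial_t Q_j + v_j - R_j = \langle v_j\rangle$, so $P_j - Q_j - v_j + \langle v_j\rangle = -\partial_t Q_j$; hence $\int_0^1\|P_j - Q_j - v_j + \langle v_j\rangle\|_2^2 = \int_0^1\|\partial_t Q_j\|_2^2\le C_{PW}^2\int_0^1\|\nabla\partial_t Q_j\|_2^2\to 0$ by~\eqref{pwi}. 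Combining with $v_j\to v_\infty$ in $L^q$ (uniformly in $\tau$) and $\langle v_j\rangle\to M$ gives $\int_0^1\|P_j - Q_\infty - v_\infty + M\|_q^2\,\mathrm d\tau\to 0$, which is the second limit in~\eqref{b18.2}.

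**Main obstacle.** The genuinely delicate point is upgrading the $L^1$-cluster-point convergence (and the $(H^1)'$-time-equicontinuity) to \emph{strong $L^q$-convergence, uniformly in $\tau\in[0,1]$}, when the only pointwise-in-time compactness readily available is in $(H^1)'$ and the only extra bound on $v$ is in $L^2$. The resolution is Vitali's theorem: the $L^\infty(0,\infty;L^2)$-bound gives uniform integrability of $\{|v_j(\tau)|^q\}$ over $\Omega$ (since $q<2$), the $(H^1)'$-equicontinuity in $\tau$ plus density gives convergence in measure uniformly in $\tau$, and together these force $L^q$-convergence uniformly in $\tau$; the subsequence in~\eqref{b18}--\eqref{b18.2} is extracted to make the "uniformly in $\tau$" part legitimate. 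I do not expect the monotonicity of $\phi\gamma$ beyond its role in guaranteeing $\mathcal D_0\ge 0$ (so that~\eqref{b15b}--\eqref{b15c} hold) to enter this lemma.
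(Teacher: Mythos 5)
The second half of your argument --- the identification $\langle v_\infty\rangle=M$ from~\eqref{b01b}, the interpolation giving~\eqref{b20.5}, the treatment of $Q_j-Q_\infty$ by integrating $\partial_\tau Q_j$ in time and using the tail of~\eqref{b15c}, and the use of~\eqref{b02} to reduce the $P_j$-limit in~\eqref{b18.2} to the other convergences --- is correct and matches the paper in substance; your direct time-integration of $\partial_\tau Q_j$ is in fact slightly cleaner than the parabolic energy estimate the paper performs for $Q_j-Q_\infty$. The problem lies in the two compactness steps you yourself single out as ``the genuinely delicate point'', and your proposed resolution of them fails. You claim that the $(H^1)'$-equicontinuity in time together with the $L^\infty(0,\infty;L^2(\Omega))$-bound yields convergence in measure, hence strong $L^q(\Omega)$-convergence via Vitali. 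But strong convergence in $H^1(\Omega)'$ of a sequence bounded in $L^2(\Omega)$ only gives \emph{weak} convergence in $L^2(\Omega)$; it does not give convergence in measure (take $f_k(x)=\sin(kx_1)$: bounded in $L^2$, convergent to $0$ in $H^1(\Omega)'$ by compactness of the embedding, but not convergent in measure nor in any $L^q$). Consequently neither the relative compactness of $\mathcal{V}$ in $L^q(\Omega)$ nor the uniform-in-$\tau$ convergence $\sup_{\tau}\|v_j(\tau)-v_\infty\|_q\to0$ in~\eqref{b18} is established by your argument, and the latter is then fed into your proof of~\eqref{b18.2}.

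The missing ingredient is genuine \emph{spatial} compactness of $v$, which you gesture at (``the parabolic smoothing of~\eqref{ks2}'') but then abandon in favour of the Vitali route. The paper extracts it as follows: by mass conservation~\eqref{b01a}, $u$ is bounded in $L^\infty(0,\infty;L^1(\Omega))$, so the regularizing properties of the heat semigroup applied to~\eqref{ks2} give a uniform bound $\|v(t)\|_{W^{1,q}}\le C_4(q)$ for $q\in[1,n/(n-1))$. The compact embedding of $W^{1,1}(\Omega)$ in $L^1(\Omega)$, interpolated with the $L^2$-bound~\eqref{b15a}, then yields the relative compactness of $\mathcal{V}$ in $L^q(\Omega)$ for $q<2$; and the same $W^{1,q}$-bound, combined with a bound on $\partial_\tau v_j$ in $L^\infty(0,1;W^{n,q'}(\Omega)')$ and the Aubin--Lions--Simon theorem, gives relative compactness of $(v_j)_{j\ge1}$ in $C([0,1];L^q(\Omega))$, after which the limit is identified as the constant-in-$\tau$ function $v_\infty$ using~\eqref{b15c} and~\eqref{b20}. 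Your proof needs this (or an equivalent source of spatial equicompactness) to go through; the time-regularity and $L^2$-boundedness alone are not enough.
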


\begin{proof}
We first recall that~\eqref{ks2}, \eqref{ks3}, \eqref{b01a}, and the regularizing properties of the heat semigroup imply that, for each $q\in [1,n/(n-1))$, there is $C_4(q)>0$ such that
\begin{equation}
	\|v(t)\|_{W^{1,q}} \le C_4(q), \qquad t\ge 0. \label{c4}
\end{equation}
In particular, owing to the compactness of the embedding of $W^{1,1}(\Omega)$ in $L^1(\Omega)$ and the $L^2$-bound~\eqref{b15a},
\begin{equation}
	\mathcal{V} = \{v(t)\ :\ t\ge 0\} \;\;\text{ is relatively compact in }\;\; L^q(\Omega) \;\;\text{ for each }\;\; q\in [1,2). \label{b19.5}
\end{equation}

We now consider a cluster point $v_\infty\in L^1(\Omega)$ of $\mathcal{V}$ in $L^1(\Omega)$ as $t\to\infty$ satisfying~\eqref{b20}, its existence being guaranteed by~\eqref{b19.5}. The property~\eqref{b20.5} is then an immediate consequence of~\eqref{b01b}, \eqref{b15a}, and~\eqref{b20}. Next, according to~\eqref{ks}, $v_j$ solves
\begin{equation}
	\begin{split}
		\partial_\tau v_j & = \Delta v_j - v_j + u_j, \qquad (\tau,x)\in (0,1)\times \Omega, \\
		\nabla v_j\cdot \mathbf{n} & = 0, \qquad (\tau,x)\in (0,1)\times \partial\Omega,
	\end{split} \label{b21}
\end{equation}
and
\begin{equation*}
	v_j(0) = v(t_j) 
\end{equation*}
for $j\ge 1$. Let $q\in (1,n/(n-1))\cap (1,2)$, $\vartheta\in W^{n,q'}(\Omega)$ with $q'=q/(q-1)$, and recall that $W^{n,q'}(\Omega)$ embeds continuously in $L^\infty(\Omega)$. It follows from~\eqref{b01a}, \eqref{c4}, \eqref{b21}, and H\"older's inequality that, for $\tau\in (0,1)$,
\begin{align*}
	\left| \langle \partial_\tau v_j(\tau) , \vartheta \rangle \right| & \le \int_\Omega |\nabla v_j(\tau)| |\nabla\vartheta|\ \mathrm{d}x + \|v_j(\tau)\|_q \|\vartheta\|_{q'} + \|u_j(\tau)\|_1 \|\vartheta\|_\infty \\
	& \le \|\nabla v_j(\tau)\|_q \|\nabla \vartheta\|_{q'} +\|v_j(\tau)\|_q \|\vartheta\|_{q'} + C(q) |\Omega| M \|\vartheta\|_{W^{n,q'}} \\
	& \le C(q) \|\vartheta\|_{W^{n,q'}}.
\end{align*}
Therefore,
\begin{equation}
	\left( \partial_t v_j \right)_{j\ge 1} \;\;\text{ is bounded in }\;\; L^\infty((0,1),W^{n,q'}(\Omega)'). \label{b23}
\end{equation}
Since $W^{1,q}(\Omega)$ is compactly embedded in $L^q(\Omega)$ and $L^q(\Omega)$ embeds continuously in $W^{n,q'}(\Omega)'$, we infer from~\eqref{c4}, \eqref{b23}, and \cite[Corollary~4]{Simo1987} that
\begin{equation*}
	(v_j)_{j\ge 1} \;\;\text{ is relatively compact in }\;\; C([0,1],L^q(\Omega)).
\end{equation*}
There are thus a subsequence of $(v_j)_{j\ge 1}$ (not relabeled) and $w\in C([0,1],L^q(\Omega))$ such that
\begin{equation}
	\lim_{j\to\infty} \sup_{\tau\in [0,1]} \|(v_j-w)(\tau)\|_q = 0. \label{b24}
\end{equation}
We now use H\"older's inequality to obtain, for $\vartheta\in W^{1,q'}(\Omega)$, $j\ge 1$, and $\tau\in [0,1]$,
\begin{align*}
	& \left| \int_\Omega \big( w(\tau) - v_\infty \big) \vartheta\ \mathrm{d}x \right| \\
	& \qquad \le \left| \int_\Omega \big( w-v_j \big)(\tau) \vartheta\ \mathrm{d}x \right| + \left| \int_\Omega \big( v_j(\tau) - v_j(0) \big) \vartheta\ \mathrm{d}x \right| + \left| \int_\Omega \big( v_j(0) - v_\infty \big) \vartheta\ \mathrm{d}x \right|\\
	& \qquad \le \|\big( w - v_j\big)(\tau) \|_{q} \|\vartheta\|_{q'} + \left| \int_0^\tau \int_\Omega \partial_\tau v_j(\tau_*) \vartheta\ \mathrm{d}x\mathrm{d}\tau_* \right| + \|v_j(0) - v_\infty \|_{q} \|\vartheta\|_{q'}\\
	& \qquad \le \|\big( w - v_j\big)(\tau) \|_{q} \|\vartheta\|_{q'} + \int_0^\tau  \|\partial_\tau v_j(\tau_*)\|_{(H^1)'} \|\vartheta\|_{H^1}\ \mathrm{d}\tau_* + \|v(t_j) - v_\infty \|_{q} \|\vartheta\|_{q'}\\
	& \qquad \le \left( \|\big( w - v_j\big)(\tau) \|_{q} + \sqrt{\tau} \left( \int_0^\tau \|\partial_\tau v_j(\tau_*)\|_{(H^1)'}^2\ \mathrm{d}\tau_* \right)^{1/2} + \|v(t_j) - v_\infty \|_{q} \right) \|\vartheta\|_{W^{1,q'}}.
\end{align*}
We may then take the limit $j\to\infty$ in the above inequality and deduce from~\eqref{b15c}, \eqref{b20.5}, and~\eqref{b24} that
\begin{equation*}
	\int_\Omega \big( w(\tau) - v_\infty \big) \vartheta(x)\ \mathrm{d}x = 0, \qquad \vartheta\in W^{1,q'}(\Omega), \quad \tau\in [0,1].
\end{equation*}
Consequently,
\begin{equation*}
	w(\tau) = v_\infty, \qquad \tau\in [0,1], 
\end{equation*}
so that the convergence~\eqref{b24} becomes
\begin{equation*}
	\lim_{j\to\infty} \sup_{\tau\in [0,1]} \|v_j(\tau) - v_\infty\|_q = 0. 
\end{equation*}
Recalling the $L^2$-bound~\eqref{b15a}, we have actually established that, for all $q\in [1,2)$,
\begin{equation}
		\lim_{j\to\infty} \sup_{\tau\in [0,1]} \|v_j(\tau) - v_\infty\|_q = 0; \label{c16}
\end{equation}
that is, the first statement in~\eqref{b18}. 

We now turn to the proof of the second statement in~\eqref{b18} and first observe that the definition of $Q$, $Q_j$, and $Q_\infty$ entails that $Q_j-Q_\infty$ is a variational solution to
\begin{align*}
	\partial_\tau (Q_j-Q_\infty) - \Delta (Q_j-Q_\infty) & = \partial_\tau Q_j + v_j-v_\infty -\langle v_j\rangle + M, \qquad (\tau,x)\in (0,1)\times\Omega, \\
	\nabla (Q_j-Q_\infty)\cdot \mathbf{n} & = 0, \qquad (\tau,x)\in (0,1)\times\partial\Omega, \\
	Q_j(0)-Q_\infty & = Q(t_j) - Q_\infty, \qquad x\in \Omega.
\end{align*}
Let $p\in (2n/(n+2),2)$. We infer from the above equations, the continuous embedding of $H^1(\Omega)$ in $L^{p/(p-1)}(\Omega)$, H\"older's inequality, and the Poincar\'e-Wirtinger inequality~\eqref{pwi} that
\begin{align*}
	& \frac{1}{2} \frac{\mathrm{d}}{\mathrm{d}t} \|Q_j-Q_\infty\|_2^2 + \|\nabla (Q_j-Q_\infty)\|_2^2 \\
	& \qquad \le \|Q_j-Q_\infty\|_2 \|\partial_\tau Q_j\|_2 + \|Q_j-Q_\infty\|_{p/(p-1)} \|v_j-v_\infty -\langle v_j\rangle + M\|_{p} \\
	& \qquad \le C_{PW} \|\nabla (Q_j-Q_\infty)\|_2 \|\partial_\tau Q_j\|_2 + C \|\nabla (Q_j-Q_\infty)\|_2 \|v_j-v_\infty\|_p.
\end{align*}
We next use Young's inequality to obtain
\begin{equation*}
	\frac{1}{2} \frac{\mathrm{d}}{\mathrm{d}t} \|Q_j-Q_\infty\|_2^2 + \frac{1}{2} \|\nabla (Q_j-Q_\infty)\|_2^2 \le C \left( \|\partial_\tau Q_j\|_2^2 + \|v_j-v_\infty\|_p^2 \right).
\end{equation*}
Hence, after integrating with respect to time,
\begin{equation}
	\begin{split}
		& \sup_{\tau\in [0,1]} \|Q_j(\tau)-Q_\infty\|_2^2 + \int_0^1 \|\nabla (Q_j(\tau)-Q_\infty)\|_2^2\ \mathrm{d}\tau \\
		& \qquad \le \|Q(t_j)-Q_\infty\|_2^2 + C \left( \int_0^1 \|\partial_\tau Q_j(\tau)\|_2^2\ \mathrm{d}\tau + \sup_{\tau\in [0,1]} \|v_j(\tau)-v_\infty\|_p^2 \right).
	\end{split} \label{b25}
\end{equation}
On the one hand, it follows from the continuous embedding of $H^1(\Omega)$ in $L^{p/(p-1)}(\Omega)$, H\"older's inequality, and the Poincar\'e-Wirtinger inequality~\eqref{pwi} that
\begin{equation*}
	\|Q(t_j)-Q_\infty\|_2 \le C_{PW} \|\nabla(Q(t_j)-Q_\infty)\|_2
\end{equation*}
and
\begin{align*}
	\|\nabla(Q(t_j)-Q_\infty)\|_2^2 & = \int_\Omega \big( Q(t_j)-Q_\infty \big) \big( v(t_j)-\langle v(t_j)\rangle - v_\infty + M \big)\ \mathrm{d}x \\
	& = \int_\Omega \big( Q(t_j)-Q_\infty \big) \big( v(t_j) - v_\infty \big)\ \mathrm{d}x \\
	& \le \| Q(t_j)-Q_\infty\|_{p/(p-1)} \|v(t_j) - v_\infty\|_p \\
	& \le C \| \nabla (Q(t_j)-Q_\infty)\|_{2} \|v(t_j) - v_\infty\|_p.
\end{align*}
Consequently,
\begin{equation*}
	\|Q(t_j)-Q_\infty\|_2 \le C_{PW} \|\nabla(Q(t_j)-Q_\infty)\|_2 \le C \|v(t_j) - v_\infty\|_p
\end{equation*}
and we deduce from~\eqref{b20.5} that
\begin{equation}
	\lim_{j\to\infty} \|Q(t_j) - Q_\infty \|_{H^1} = 0. \label{b26}
\end{equation}
On the other hand, since $p\in (1,2)$, the integrability~\eqref{b15c} and the convergence~\eqref{c16} imply that
\begin{equation}
	 \lim_{j\to\infty} \left[ \int_0^1 \|\partial_\tau Q_j(\tau)\|_2^2\ \mathrm{d}\tau + \sup_{\tau\in [0,1]} \|v_j(\tau)-v_\infty\|_p^2 \right] = 0. \label{b27}
\end{equation}
Combining the above statement with~\eqref{b25} and~\eqref{b26} completes the proof of~\eqref{b18} and proves the first statement in~\eqref{b18.2}.

Finally, let $q\in [1,2)$. We infer from~\eqref{b02} and H\"older's inequality that
\begin{align*}
	& \int_0^1 \|P_j(\tau) - Q_\infty - v_\infty + M\|_q^2\ \mathrm{d}\tau \\
	& \qquad = \int_0^1 \|\partial_\tau Q_j(\tau) + Q_j(\tau) + v_j(\tau) - \langle v_j(\tau) \rangle - Q_\infty - v_\infty + M \|_q^2\ \mathrm{d}\tau \\
	& \qquad \le 4 |\Omega|^{(2-q)/q} \int_0^1 \|\partial_\tau Q_j(\tau)\|_2^2\ \mathrm{d}\tau + 4 |\Omega|^{(2-q)/q} \sup_{\tau\in [0,1]} \|Q_j(\tau) - Q_\infty\|_2^2 \\
	& \qquad\qquad + 8 \sup_{\tau\in [0,1]} \|v_j(\tau) - v_\infty \|_q^2.
\end{align*}
In view of~\eqref{b18} and~\eqref{b27}, the right-hand side of the above inequality converges to zero as $j\to\infty$ and the proof of Lemma~\ref{lemb06} is complete.
\end{proof}

\begin{proof}[Proof of Theorem~\ref{thm1}]
According to Lemma~\ref{lemb06}, $\mathcal{V} = \{v(t)\ :\ t\ge 0\}$ is relatively compact in $L^1(\Omega)$ and we are left with identifying the cluster points of $\mathcal{V}$ in $L^1(\Omega)$ as $t\to\infty$. 
	
We thus consider a cluster point $v_\infty\in L^1(\Omega)$ of $\mathcal{V}$ in $L^1(\Omega)$ as $t\to\infty$ satisfying~\eqref{b20} and keep the notation introduced in Lemma~\ref{lemb06}. For $\varepsilon\in (0,1)$, we deduce from the positivity and continuity of $(\phi\gamma)'$ on $[\varepsilon,1/\varepsilon]$ that
\begin{equation*}
	\mu_\varepsilon := \min_{[\varepsilon,1/\varepsilon]}\{(\phi\gamma)'\}>0.
\end{equation*}
Introducing 
\begin{equation*}
	v_j^\varepsilon := \min\left\{ \max\big\{ v_j , \varepsilon\big\} , \frac{1}{\varepsilon} \right\}, \qquad j\ge 1,
\end{equation*}
which satisfies
\begin{equation*}
	\nabla v_j^\varepsilon = \mathbf{1}_{[\varepsilon,1/\varepsilon]}(v_j) \nabla v_j \;\;\text{ a.e. in }\;\; (0,1)\times\Omega,
\end{equation*}
we infer from the Poincar\'e-Wirtinger inequality that
\begin{align*}
	& \int_0^1 \|v_j(\tau) - \langle v_j(\tau) \rangle \|_1\ \mathrm{d}\tau \\
	& \qquad \le \int_0^1 \|v_j^\varepsilon(\tau) - \langle v_j^\varepsilon(\tau) \rangle \|_1\ \mathrm{d}\tau + \int_0^1 \|(v_j-v_j^\varepsilon)(\tau) - \langle (v_j-v_j^\varepsilon)(\tau) \rangle \|_1\ \mathrm{d}\tau \\
	& \qquad \le C \int_0^1 \|\nabla v_j^\varepsilon(\tau)\|_1\ \mathrm{d}\tau + 2 \sup_{\tau\in [0,1]} \|(v_j-v_j^\varepsilon)(\tau)\|_1 \\
	& \qquad \le \frac{C}{\sqrt{\mu_\varepsilon}} \int_0^1 \int_\Omega \sqrt{\mu_\varepsilon} \mathbf{1}_{[\varepsilon,1/\varepsilon]}(v_j(\tau,x)) |\nabla v_j(\tau,x)|\ \mathrm{d}x\mathrm{d}\tau + 2 \sup_{\tau\in [0,1]} \|\mathbf{1}_{(0,\varepsilon)}(v_j(\tau)) v_j(\tau)\|_1 \\
	& \qquad\qquad + 2 \sup_{\tau\in [0,1]} \|\mathbf{1}_{(1/\varepsilon,\infty)}(v_j(\tau)) v_j(\tau)\|_1 \\
	& \qquad \le \frac{C}{\sqrt{\mu_\varepsilon}} \int_0^1 \int_\Omega \sqrt{(\phi\gamma)'(v_j(\tau,x))} |\nabla v_j(\tau,x)|\ \mathrm{d}x\mathrm{d}\tau + 2 |\Omega|\varepsilon + 2 \varepsilon \sup_{\tau\in [0,1]} \|v_j(\tau)\|_2^2.
\end{align*}
Owing to~\eqref{b15a} and H\"older's inequality, we further obtain that
\begin{align*}
	& \int_0^1 \|v_j(\tau) - \langle v_j(\tau) \rangle \|_1\ \mathrm{d}\tau \\
	& \qquad \le \frac{C \sqrt{|\Omega|}}{\sqrt{\mu_\varepsilon}} \left[ \int_0^1 \int_\Omega (\phi\gamma)'(v_j(\tau,x)) |\nabla v_j(\tau,x)|^2\ \mathrm{d}x\mathrm{d}\tau \right]^{1/2} + 2 |\Omega|\varepsilon + 2 C_3^2 \varepsilon \\
	& \qquad \le \frac{C \sqrt{|\Omega|}}{\sqrt{\mu_\varepsilon}} \left[ \int_{t_j}^\infty \int_\Omega (\phi\gamma)'(v(t,x)) |\nabla v(t,x)|^2\ \mathrm{d}x\mathrm{d}t \right]^{1/2} + 2 |\Omega|\varepsilon + 2 C_3^2 \varepsilon.
\end{align*}
Therefore, owing to~\eqref{b01b}, \eqref{b15b}, \eqref{b20}, and~\eqref{b18},
\begin{equation*}
	\|v_\infty-M\|_1 = \int_0^1 \|v_\infty-M\|_1\ \mathrm{d} \tau = \lim_{j\to\infty} \int_0^1 \|v_j(\tau) - \langle v_j(\tau) \rangle \|_1\ \mathrm{d}\tau \le 2 |\Omega|\varepsilon + 2 C_3^2 \varepsilon.
\end{equation*}
The above inequality being valid for all $\varepsilon\in (0,1)$, we conclude that $v_\infty=M$ a.e. in $\Omega$.

We have thus shown that the constant state $M$ is the only possible cluster point of the family $\mathcal{V} = \{ v(t)\ :\ t\ge 0\}$ in $L^1(\Omega)$ as $t\to\infty$. Together with the compactness of $\mathcal{V}$ in $L^1(\Omega)$ already established in Lemma~\ref{lemb06}, this property implies the convergence of $v(t)$ to $M$ in $L^1(\Omega)$ as $t\to\infty$. Since $\mathcal{V}$ is a bounded subset of $L^2(\Omega)$ according to~\eqref{b15a}, we conclude that
\begin{equation}
	\lim_{t\to\infty} \|v(t)-M\|_q = 0 \;\;\text{ for all }\;\; q\in [1,2). \label{b28}
\end{equation}
Furthermore, it follows from~\eqref{ks2} that, for $t>0$ and $\vartheta\in C_c^2(\Omega)$, 
\begin{align*}
	& \int_t^{t+1} \int_\Omega \big( u(\tau,x) - M \big) \vartheta(x)\ ~\mathrm{d}x ~\mathrm{d}\tau \\
	& \qquad = \int_t^{t+1} \int_\Omega \left[ \partial_t \big( v(\tau,x) - \langle v(\tau)\rangle \big) - \Delta v(\tau,x) + v(\tau,x) - \langle v(\tau) \rangle \right] \vartheta(x)\ ~\mathrm{d}x ~\mathrm{d}\tau \\
	& \qquad =  \int_t^{t+1} \int_\Omega \left[ - \Delta \big( \partial_t Q + v \big)(\tau,x) + v(\tau,x) - \langle v(\tau) \rangle \right] \vartheta(x)\ ~\mathrm{d}x ~\mathrm{d}\tau \\
	& \qquad = \int_t^{t+1} \int_\Omega \partial_t \nabla Q(\tau,x)\cdot \nabla\vartheta(x)\ ~\mathrm{d}x ~\mathrm{d}\tau + \int_t^{t+1} \int_\Omega \big( v(\tau,x) - \langle v(\tau) \rangle \big) (\vartheta - \Delta \vartheta)(x)\ ~\mathrm{d}x ~\mathrm{d}\tau.
\end{align*}
Hence,  by H\"older's inequality,
\begin{align*}
	& \left| \int_t^{t+1} \int_\Omega \big( u(\tau,x) - M \big) \vartheta(x)\ ~\mathrm{d}x ~\mathrm{d}\tau \right| \\
	& \qquad \le \|\vartheta\|_{H^1} \left( \int_t^{t+1} \|\partial_t Q(\tau)\|_{H^1}^2\ \mathrm{d}\tau \right)^{1/2} \\
	& \qquad\qquad + \|\vartheta - \Delta\vartheta\|_\infty \sup_{\tau\in [t,t+1]} \left\{ \|v(\tau) - M\|_1 + \big| M - \langle v(\tau) \rangle \big| \right\}, 
\end{align*}
and we use~\eqref{b01b}, \eqref{b15c}, and~\eqref{b28} to pass to the limit $t\to\infty$ and obtain
\begin{equation*}
	\lim_{t\to\infty} \int_t^{t+1} \int_\Omega \big( u(\tau,x) - M \big) \vartheta(x)\ ~\mathrm{d}x ~\mathrm{d}\tau = 0.
\end{equation*}
A density argument and~\eqref{b15a} allow us to extend the above limit to all $\vartheta\in H^1(\Omega)$ and complete the proof.
\end{proof}

\begin{proof}[Proof of Theorem~\ref{thm2}]
Since $v^{in}$ is positive and continuous in $\overline{\Omega}$ by~\eqref{civ}, it follows from~\eqref{b01a} and \cite[Lemma~2.6]{Fuji2016} that there is $v_*>0$ such that
\begin{equation} 
	v(t,x) \ge v_*>0, \qquad (t,x)\in [0,\infty)\times \Omega. \label{vstar}
\end{equation}
Owing to the positivity and monotonicity of $\phi\gamma$, we further obtain that
\begin{equation}
	(\phi\gamma)(v)(t,x) \ge \omega_* := (\phi\gamma)(v_*)>0, \qquad (t,x)\in [0,\infty)\times \Omega. \label{d1}
\end{equation}

For $\varepsilon\in (0,1)$ and $j\ge 1$, we define 
\begin{align*}
	A_{j,\varepsilon} & := \big\{ (\tau,x)\in (0,1)\times\Omega\ :\ u_j(\tau,x) \ge (1+\varepsilon) v_j(\tau,x) \big\}, \\
	B_{j,\varepsilon} & := \big\{ (\tau,x)\in (0,1)\times\Omega\ :\ u_j(\tau,x) \le (1-\varepsilon) v_j(\tau,x) \big\}, \\
	I_{j,\varepsilon} & := \big\{ (\tau,x)\in (0,1)\times\Omega\ :\ (1-\varepsilon) v_j(\tau,x) < u_j(\tau,x) < (1+\varepsilon) v_j(\tau,x) \big\}, 
\end{align*}
and note that $(0,1)\times\Omega = A_{j,\varepsilon} \cup I_{j,\varepsilon} \cup B_{j,\varepsilon}$. 

On the one hand, on $A_{j,\varepsilon}$, $u_j\ge v_j$ and we infer from~\eqref{nd2}, \eqref{d1}, and the monotonicity of $\phi$ that
\begin{align*}
	\gamma(v_j) \big( \phi(u_j)-\phi(v_j) \big) (u_j-v_j) & \ge \gamma(v_j) \big( \phi((1+\varepsilon)v_j)-\phi(v_j) \big) (u_j-v_j) \\
	& \ge (\eta_{1+\varepsilon}-1) (\phi\gamma)(v_j) (u_j-v_j) \\
	& \ge (\eta_{1+\varepsilon}-1) \omega_* |u_j-v_j|.
\end{align*}
Consequently, thanks to~\eqref{b15b}, 
\begin{equation}
\begin{split}
	& \lim_{j\to\infty} \int_{A_{j,\varepsilon}} |u_j-v_j|\ ~\mathrm{d}x ~\mathrm{d}\tau \\
	& \qquad \le \frac{1}{(\eta_{1+\varepsilon}-1) \omega_*} \lim_{j\to\infty} \int_{A_{j,\varepsilon}} \gamma(v_j) \big( \phi(u_j)-\phi(v_j) \big) (u_j-v_j)\ ~\mathrm{d}x ~\mathrm{d}\tau \\
	& \qquad \le \frac{1}{(\eta_{1+\varepsilon}-1) \omega_*} \lim_{j\to\infty} \int_0^1 \int_\Omega \gamma(v_j) \big( \phi(u_j)-\phi(v_j) \big) (u_j-v_j)\ ~\mathrm{d}x ~\mathrm{d}\tau = 0.
\end{split}\label{d2}
\end{equation}
Similarly, on $B_{j,\varepsilon}$, $u_j\le v_j$ and we infer from~\eqref{nd2}, \eqref{d1}, and the monotonicity of $\phi$ that
\begin{align*}
	\gamma(v_j) \big( \phi(u_j)-\phi(v_j) \big) (u_j-v_j) & = \gamma(v_j) \big( \phi(v_j)-\phi(u_j) \big) (v_j-u_j) \\
	& \ge \gamma(v_j) \big( \phi(v_j) - \phi((1-\varepsilon)v_j) \big) (v_j-u_j) \\
	& \ge \left( 1 - \frac{1}{\eta_{1/(1-\varepsilon)}} \right) (\phi\gamma)(v_j) (v_j-u_j) \\
	& \ge \frac{\eta_{1/(1-\varepsilon)}-1}{\eta_{1/(1-\varepsilon)}} \omega_* |u_j-v_j|.
\end{align*}
We then argue as above to conclude that
\begin{equation}
	\lim_{j\to\infty} \int_{B_{j,\varepsilon}} |u_j-v_j|\ ~\mathrm{d}x ~\mathrm{d}\tau = 0. \label{d3}
\end{equation}
On the other hand, $-\varepsilon v_j \le u_j - v_j \le \varepsilon v_j$ on $I_{j,\varepsilon}$, so that
\begin{equation}
	\int_{I_{j,\varepsilon}} |u_j-v_j|\ ~\mathrm{d}x ~\mathrm{d}\tau \le \varepsilon \int_0^1 \int_\Omega v_j\ ~\mathrm{d}x ~\mathrm{d}\tau \le \varepsilon |\Omega| \max\{ M , \langle v^{in} \rangle\} \label{d4}
\end{equation}
by~\eqref{b01b}. 

Gathering~\eqref{d2}, \eqref{d3}, and~\eqref{d4}, we find that
\begin{equation*}
	\limsup_{j\to\infty} \int_0^1 \|(u_j-v_j)(\tau)\|_1\ \mathrm{d}\tau \le \varepsilon |\Omega| \max\{ M , \langle v^{in} \rangle\}
\end{equation*}
for all $\varepsilon\in (0,1)$. Therefore,
\begin{equation*}
	\lim_{j\to\infty} \int_0^1 \|(u_j-v_j)(\tau)\|_1\ \mathrm{d}\tau = 0,
\end{equation*}
and, recalling~\eqref{b18}, we have established that
\begin{equation}
	\lim_{j\to\infty} \int_0^1 \|u_j(\tau) - v_\infty\|_1\ \mathrm{d}\tau = 0. \label{d5}
\end{equation}
We now combine~\eqref{b18.2} and~\eqref{d5} to obtain that, for $\vartheta\in C_c^\infty(\Omega)$, 
\begin{align*}
	\int_\Omega (v_\infty-M)\vartheta\ \mathrm{d}x & = \int_0^1 \int_\Omega (v_\infty-M)\vartheta\ ~\mathrm{d}x ~\mathrm{d}\tau = \lim_{j\to\infty} \int_0^1 \int_\Omega (u_j(\tau)-M)\vartheta\ \mathrm{d}x \\
	& = - \lim_{j\to\infty} \int_0^1 \int_\Omega P_j(\tau) \Delta\vartheta\ ~\mathrm{d}x ~\mathrm{d}\tau = - \int_0^1 \int_\Omega \big( Q_\infty + v_\infty - M \big) \Delta\vartheta\ ~\mathrm{d}x ~\mathrm{d}\tau \\
	& = \int_\Omega (v_\infty - M) \vartheta\ \mathrm{d}x - \int_\Omega (v_\infty-M) \Delta\vartheta\ \mathrm{d}x.
\end{align*}
Therefore,
\begin{equation*}
	- \int_\Omega Q_\infty \vartheta\ \mathrm{d}x = \int_\Omega (v_\infty - M) \Delta \vartheta\ \mathrm{d}x = 0, \qquad \vartheta\in C_c^\infty(\Omega),
\end{equation*}
a property which implies that $Q_\infty=0$ and thus $v_\infty=M$. As in the proof of Theorem~\ref{thm1}, we have thus shown that the constant state $M$ is the only possible cluster point of the family $\mathcal{V} = \{ v(t)\ :\ t\ge 0\}$ in $L^1(\Omega)$ as $t\to\infty$, from which the convergence~\eqref{b28} follows. An immediate consequence is that~\eqref{b20} holds true with $t_j=j$ and we infer from~\eqref{d5} that
\begin{equation*}
	\lim_{j\to\infty} \int_j^{j+1} \|u(t)-M\|_1\ \mathrm{d}t = 0,
\end{equation*}
which completes the proof.
\end{proof}

\begin{proof}[Proof of Theorem~\ref{thm3}]
Again, we start by proving that the only cluster point of $\mathcal{V}$ in $L^1(\Omega)$ as $t\to\infty$ is $M$. We first use assumption~\eqref{gac1}, together with~\eqref{b15a}, to obtain that, for any $\theta'\ge0$,
\begin{equation} \label{alt3}
\sup_{t\in [0,\infty)} \int_\Omega \left( \frac{(1+v)^{\theta'}}{\gamma(v)}\right)^{\frac2{\aone+\theta'}} \mathrm{d}x  \le 2 K_\gamma^{\frac2{\aone+\theta'}} \left( |\Omega| + C_3^2 \right).
\end{equation}
In view of assumption~\eqref{nd4} we now split the analysis into two cases, depending on how close $u$ and $v$ are.

On the set $\left\{\frac12 v < u < 2v\right\}$, we use assumption~\eqref{nd4}, together with~\eqref{b15b}, and deduce that
\begin{equation}\begin{split}\label{bet3}
\int_0^\infty \int_\Omega & \mathbf{1}_{(\frac12,2)}  \left(\frac uv\right)  \frac{|u-v|^{\atwo+1}}{(1+v)^{\theta}} \gamma(v) \ ~\mathrm{d}x ~\mathrm{d}t \\
\le & 2^\theta \int_0^\infty \int_\Omega \mathbf{1}_{(\frac12,2)}  \left(\frac uv\right) \frac{|u-v|^{\atwo+1}}{(1+\max\{u,v\})^{\theta}}  \gamma(v) \ ~\mathrm{d}x ~\mathrm{d}t \le 2^\theta \frac{C_3}{K_\phi}.
\end{split}\end{equation}

Let $p_\atwo=2 \frac{\atwo+1}{2+\aone+\theta}\ge1$. We use H\"older's inequality and \eqref{alt3} with $\theta'=\theta$ to get
\begin{align*}
	& \left\|(u-v) \mathbf{1}_{(\frac12,2)} \left(\frac uv\right)\right\|_{p_\atwo}^{p_\atwo}\\
	& \quad = \int_\Omega \mathbf{1}_{(\frac12,2)} \left(\frac uv\right) |u-v|^{p_\atwo} \left(\gamma(v)(1+v)^{-\theta} \right)^{p_\atwo/(\atwo+1)} \left(\gamma(v)(1+v)^{-\theta} \right)^{-p_\atwo/(\atwo+1)}\ \mathrm{d}x \\
	& \quad \le \left( \int_\Omega  \mathbf{1}_{(\frac12,2)} \left(\frac uv\right) \frac{|u-v|^{\atwo+1}}{(1+v)^{\theta}} \gamma(v) \ \mathrm{d}x \right)^{p_\atwo/(\atwo+1)} \left( \int_\Omega \left(\frac{(1+v)^{\theta}}{\gamma(v)}\right)^{p_\atwo/(\atwo+1-p_\atwo)}\ \mathrm{d}x \right)^{(\atwo+1-p_\atwo)/(\atwo+1)} \\
	& \quad \le 2^{\frac{\aone+\theta}{2+\aone+\theta}} K_\gamma^{\frac{2}{2+\aone+\theta}} \left( |\Omega| + C_3^2 \right)^{\frac{\aone+\theta}{2+\aone+\theta}} \left( \int_\Omega  \mathbf{1}_{(\frac12,2)} \left(\frac uv\right) \frac{|u-v|^{\atwo+1}}{(1+v)^{\theta}} \gamma(v)\ \mathrm{d}x \right)^{2/(2+\aone+\theta)}.
\end{align*}
Combining this last inequality with \eqref{bet3}, we finally get
\begin{equation}
	\int_0^\infty \left\|(u-v) \mathbf{1}_{(\frac12,2)} \left(\frac uv\right)\right\|_{p_\atwo}^{\atwo +1} \mathrm{d}t
	 \le C_5:= 2^{\frac{\aone+3\theta}2} K_\gamma \left( |\Omega| + C_3^2 \right)^{\frac{\aone+\theta}2} \frac{C_3}{K_\phi}. \label{cvset1}
\end{equation}

On the set $\left\{\frac12 v \ge u\} \cup \{u \ge 2v\right\}$, assumption~\eqref{nd4}, together with~\eqref{b15b}, gives
\begin{equation*}
	\begin{split}
	\int_0^\infty \int_\Omega \mathbf{1}_{[0,\frac12]\cup[2,\infty)}  \left(\frac uv\right) |u-v|^{m+1} \gamma(v) \ ~\mathrm{d}x ~\mathrm{d}t \le \frac{C_3}{K_\phi}.
	\end{split}
\end{equation*}

Then, setting $p_m:= 2\frac{m+1}{2+\aone}\ge1$, similar computations as before with $\theta$ and $\atwo$ replaced by $0$ and $m$, respectively, give
\begin{align} \label{cvset2}
	\int_0^\infty \left\|(u-v) \mathbf{1}_{[0,\frac12]\cup[2,\infty)} \left(\frac uv\right)\right\|_{p_m}^{m +1} \mathrm{d}t
	 \le C_6:= 2^{\frac{k}{2}} K_\gamma \left( |\Omega| + C_3^2 \right)^{\frac{k}{2}} \frac{C_3}{K_\phi}.
\end{align}

Finally, let $p\in\big[1,\min\{p_\atwo,p_m\}\big]\subset \big[1,\min\{\atwo+1,m+1\}\big]$. Thanks to H\"older's inequality, applied first over the domain $\Omega$ then over $(t_j,t_j+1)$, we have, for $j\ge 1$,
\begin{align*}
	& \int_0^1 \|(u_j-v_j)(\tau)\|_{p}^{p}\ \mathrm{d}\tau \\
	& \qquad = \int_{t_j}^{1+t_j} \left\|\mathbf{1}_{(\frac12,2)} \left(\frac uv\right) (u-v)\right\|_{p}^{p}\ \mathrm{d}t + \int_{t_j}^{1+t_j} \left\|\mathbf{1}_{[0,\frac12]\cup[2,\infty)} \left(\frac uv\right) (u-v)\right\|_{p}^{p}\ \mathrm{d}t\\
	& \qquad \le |\Omega|^{\frac{p_\atwo -p}{p_\atwo}} \left(\int_{t_j}^{1+t_j} \left\|\mathbf{1}_{(\frac12,2)} \left(\frac uv\right) (u-v)(t)\right\|_{p_\atwo}^{\atwo+1}\ \mathrm{d}t\right)^\frac{p}{\atwo+1} \\
	& \qquad \qquad \qquad \qquad \qquad \qquad+ |\Omega|^{\frac{p_m-p}{p_m}}  \left(\int_{t_j}^{1+t_j} \left\|\mathbf{1}_{[0,\frac12]\cup[2,\infty]} \left(\frac uv\right) (u-v)(t)\right\|_{p_m}^{m+1}\ \mathrm{d}t \right)^\frac{p}{m+1}, 
\end{align*}
so that~\eqref{cvset1} and~\eqref{cvset2} imply
\begin{equation}
	\lim_{j\to\infty} \int_0^1 \|(u_j-v_j)(\tau)\|_{p}^{p}\ \mathrm{d}\tau = 0. \label{c20}
\end{equation}
Recalling~\eqref{b18}, we deduce from~\eqref{c20} that~\eqref{d5} holds, and we can proceed as in the proof of Theorem \ref{thm2} to prove \eqref{b28}.

Finally, for $t\ge 0$,
\begin{equation*}
	\int_{t}^{t+1} \|u(\tau)-M\|_{p}^{p}\ \mathrm{d}\tau \le 2^{p-1} \int_{t}^{t+1} \|(u-v)(\tau)\|_{p}^{p}\ \mathrm{d}\tau
	+ 2^{p-1} \sup_{\tau\in [t,t+1]}\{\|v(\tau)-M\|_{p}^{p}\}.
\end{equation*}
Thus, assuming further that $p<2$, we may pass to the limit as $t\to\infty$ in the above inequality with the help of~\eqref{b28} and~\eqref{c20} and find
\begin{equation*}
	\lim_{t\to\infty}\int_{t}^{t+1} \|u(\tau)-M\|_{p}^{p}\ \mathrm{d}\tau = 0,
\end{equation*}
thus completing the proof.
\end{proof}

\section{Application to $\phi$ and $\gamma$ satisfying~\eqref{pgck}} \label{sec:appli}

We now come back to the case where $\phi$ and $\gamma$ are given by~\eqref{pgck} and satisfy $(\phi\gamma)'\ge 0$. We first prove Theorem~\ref{thm0} and then shortly comment its outcome.

\begin{proof}[Proof of Theorem~\ref{thm0}]
We first note that $\phi(s)=s^m$, $s\ge 0$, satisfies \eqref{nd2} with $\eta_\lambda=\lambda^m$ so that, in the case where $\inf_\Omega v^{in}>0$, a direct application of Theorem~\ref{thm2} (together with Remarks~\ref{rem2b} and~\ref{rem2}) yields~\eqref{cv0} for all $q\in[1,2)$ and for $p=1$.

Let us now suppose that $s_0>0$ and $m\ge \frac{k}{2}>0$. We verify that $\phi(s)=s^m$ satisfies the condition~\eqref{nd4} for any $\atwo\ge1$ with $\atwo\ge m$ and for $\theta=\atwo-m\ge0$. For instance, one can take $m=\atwo$ and $\theta=0$ when $m\ge1$, while one may take $\atwo=1$ and $\theta = 1-m>0$ when $m<1$. In both cases, the convergence~\eqref{cv0} holds for any $q\in[1,2)$ and any $p\in\left [1,2\frac{m+1}{\aone+2}\right]$ with $p<2$.


Finally, by~\eqref{pgck}, we have that
	\begin{equation*}
		(\phi\gamma)(s) = s^m \left(a+\frac{b}{(s+s_0)^k}\right) \ge \frac{b}{2^k} s^{m-k}, \qquad s\ge s_0,
	\end{equation*}
	so that, if $m+1>k$, then
	\begin{equation*}
		\Psi(s) \ge \Psi(s_0) + \frac{b}{2^k} \frac{s^{m+1-k} - s_0^{m+1-k}}{m+1-k}, \qquad s\ge s_0.
	\end{equation*}
	We then infer from~\eqref{b15a} and the above lower bound on $\Psi$ that $\mathcal{V} = \{v(t)\ :\ t\ge 0\}$ is bounded in $L^{\max\{2,m+1-k\}}(\Omega)$. Consequently, if $m>k+1$, then the convergence~\eqref{cv0} can be extended to all $q\in [2,m+1-k)$.
\end{proof}

We conclude with a short discussion of our results in light of the analysis performed in \cite{ChKi2023}. For $\phi$ and $\gamma$ satisfying~\eqref{pgck}, Choi \& Kim show by a linear stability analysis \cite[Theorem~3.2]{ChKi2023}, supplemented with numerical tests, that the emergence of pattern formation is connected with the condition that the initial (averaged) mass $M$ of cells, see~\eqref{def:mass}, belongs to the set $E$ defined in~\eqref{def:E}. As a first consequence, no pattern formation is expected when the set $E$ is empty, a condition that can be rewritten as
\begin{equation}\label{pgmon}
(\phi \gamma)'(s)\ge 0, \qquad s>0.
\end{equation}
On the one hand, assuming positive values for the initial concentration $v^{in}$, Theorem~\ref{thm0} gives the asymptotic spatial homogeneity exactly under the condition~\eqref{pgmon}. In this sense, our result appears to be optimal in this case. On the other hand, when we allow the initial concentration $v^{in}$ to vanish, Theorem~\ref{thm0} requires the supplementary constraints that $s_0>0$ and $m\ge\frac{k}{2}$. The constraint $s_0>0$ is crucial to avoid a singularity of $\gamma$ at $s=0$ and also appears in the linear stability analysis \cite[Theorem~3.2]{ChKi2023}. As for the other constraint $m\ge\frac{k}{2}$, let us first point out that, when $a=0$, 
the condition~\eqref{pgmon} exactly rewrites as $m \ge k$, which is a stronger condition than $m\ge\frac{k}{2}$. However, when $a>0$ and $s_0>0$, the condition~\eqref{pgmon} becomes more intricate and actually involves $m$, $k$, $\frac{a}{b}$, and $s_0$. This condition is compatible with $m < \frac{k}{2}$, as the following computation shows: since 
\begin{equation*}
(\phi \gamma)'(s) = \frac{s^{m-1}}{(s+s_0)^{k+1}} \left[ am (s+s_0)^{k+1} + b(m-k)s + bms_0 \right], \qquad s>0,
\end{equation*}
it satisfies
\begin{equation*}
	(\phi\gamma)'(s)\ge 0 \;\; {\text{ for }\; s > 0} \;\; \text{ if and only if } \;\; \frac{a}{b} s_0^k \ge \frac{1}{m} \left( \frac{(k-m)_+}{(k+1)} \right)^{k+1}.
\end{equation*}
While the latter is obviously true when $m\ge k$, it also holds true for $m\in (0,k)$ provided $\frac{a}{b} s_0^k$ is large enough. For such a choice, we expect no pattern formation according to the analysis of \cite{ChKi2023} but Theorem~\ref{thm0} proves it only for $v^{in}>0$ when $m\in \left( 0,\frac{k}{2}\right)$. We may nevertheless apply Theorem~\ref{thm1} in this range of $m$ and obtain the asymptotic spatial homogeneity in a very weak sense, see~\eqref{cv1}, even without the initial positivity of $v^{in}$. This refinement holds whenever $(\phi \gamma)'$ is positive on $(0,\infty)$. As a conclusion, the class of functions $\phi$ and $\gamma$ given by~\eqref{pgck} for which we expect the asymptotic homogeneity but cannot prove it, even in a very weak sense, is the quite specific class of functions~\eqref{pgck} for which $\phi \gamma$ is non-decreasing with $m\in\left(0,\frac{k}{2}\right)$ and has at least one critical point. This includes the case when $(a,b,s_0,k)\in (0,\infty)^4$ and $m\in \left(0,\frac{k}{2}\right)$ satisfies
\begin{equation}
	a m (k+1)^{k+1} s_0^k= b(k-m)^{k+1}. \label{z}
\end{equation}
Indeed, introducing $s_1 := \frac{m+1}{k-m} s_0$, we infer from~\eqref{z} that
\begin{align*}
	& am(k+1) (s_1+s_0)^k = b(k-m)\\
	& am(s_1+s_0)^{k+1} + b(m-k) s_1 + bm s_0 = 0,
\end{align*}
which implies, together with the convexity of $s\mapsto s^{k+1}$, that, for $s>0$,
\begin{align*}
	(\phi \gamma)'(s) & = \frac{s^{m-1}}{(s+s_0)^{k+1}} \left[ am (s+s_0)^{k+1} + b(m-k)s + bms_0 \right] \\
	& = \frac{s^{m-1}}{(s+s_0)^{k+1}} \left[ am (s+s_0)^{k+1} - am (s_1+s_0)^{k+1} + b(m-k)(s-s_1) \right] \\
	& \qquad + \frac{s^{m-1}}{(s+s_0)^{k+1}} \left[ am (s_1+s_0)^{k+1} + b(m-k)s_1 + bms_0 \right] \\
	& = \frac{s^{m-1}}{(s+s_0)^{k+1}} \left[ am (s+s_0)^{k+1} - am (s_1+s_0)^{k+1} - am(k+1) (s_1+s_0)^k (s-s_1) \right]\ge 0 
\end{align*}
and $(\phi\gamma)'(s_1)=0$.

Finally, a question raised by the analysis in \cite{ChKi2023} that we leave open is to understand the long-time behaviour when $E$ is not empty, and, in particular, to show the expected asymptotic spatial homogeneity when $M\notin E$. Since the strategy of proof presented in this paper relies on estimates that are global in time and space, and makes full use of the monotonicity of $\phi \gamma$ everywhere, answering this last question would require a completely new strategy, and might require more refined local estimates.

\section*{Acknowledgments}

Part of this work was done while we enjoyed the hospitality of KAIST, Daejeon. We thank Yong-Jung Kim and Kyunghan Choi for stimulating discussions.

\bibliographystyle{siam}
\bibliography{KoreanAirline}

\end{document}